\providecommand{\U}[1]{\protect\rule{.1in}{.1in}}
\newtheorem{theorem}{Theorem}
\newtheorem{corollary}[theorem]{Corollary}
\newtheorem{lemma}[theorem]{Lemma}
\newtheorem{proposition}[theorem]{Proposition}
\newenvironment{proof}[1][Proof]{\noindent\textbf{#1.} }{\ \rule{0.5em}{0.5em}}
\begin{document}

\title{Recurrences and Congruences for Higher order Geometric Polynomials and Related Numbers}
\author{Levent Karg\i n\thanks{leventkargin48@gmail.com} and Mehmet
Cenkci\thanks{cenkci@akdeniz.edu.tr}\\Department of Mathematics, Akdeniz University, Antalya Turkey}
\maketitle

\begin{abstract}
We obtain new recurrence relations, an explicit formula, and convolution
identities for higher order geometric polynomials. These relations generalize
known results for geometric polynomials, and lead to congruences for higher
order geometric polynomials, particularly for $p$-Bernoulli numbers.
\newline\emph{Mathematics Subject Classification 2010:} 11A07, 11B37, 11B68,
11B73. \newline\emph{Keywords:} Higher order geometric polynomials,
$p$-Bernoulli numbers, congruences.

\end{abstract}

\section{Introduction}

\setcounter{theorem}{0} \setcounter{equation}{0}

For a complex variable $y$, the geometric polynomials $w_{n}(y)$ of degree $n$
are defined by \cite{T}
\begin{equation}
w_{n}\left(  y\right)  =\sum_{k=0}^{n}%
\genfrac{\{}{\}}{0pt}{}{n}{k}%
k!y^{k}, \label{30}%
\end{equation}
where $%
\genfrac{\{}{\}}{0pt}{}{n}{k}%
$ is the Stirling number of the second kind \cite{Graham}. These polynomials
have been studied from analytic, combinatoric, and number theoretic points of
view. Analytically, they are used in evaluating geometric series of the form
\cite{B}
\[
\sum_{k=0}^{\infty}k^{n}y^{k},
\]
with
\[
\left(  y\frac{d}{dy}\right)  ^{n}\frac{1}{1-y}=\sum_{k=0}^{\infty}k^{n}%
y^{k}=\frac{1}{1-y}w_{n}\left(  \frac{y}{1-y}\right)  .
\]
for every $\left\vert y\right\vert <1$ and every $n\in{\mathbb{Z}}$, $n\geq0$.
Combinatorially, they are related to the total number of preferential
arrangements of $n$ objects
\[
w_{n}\left(  1\right)  :=w_{n}=\sum_{k=0}^{n}%
\genfrac{\{}{\}}{0pt}{}{n}{k}%
k!,
\]
that is, the number of partitions of an $n$-element set into $k$ nonempty
distinguishable subsets (c.f. \cite{Dasef-Kautz-1997}). Number theoretic
studies on the geometric polynomials are mostly originated from their
exponential generating function
\[
\sum_{n=0}^{\infty}w_{n}\left(  y\right)  \frac{t^{n}}{n!}=\frac{1}{1-y\left(
e^{t}-1\right)  }.
\]
For example, setting $y=-\frac{1}{2}$ gives
\[
w_{n}\left(  -\frac{1}{2}\right)  =\frac{2}{n+1}\left(  1-2^{n+1}\right)
B_{n+1}=-\frac{T_{n}}{2^{n}},
\]
where $B_{n}$ are Bernoulli numbers and $T_{n}$ are tangent numbers. Bernoulli
numbers also occur in integrals involving geometric polynomials, namely we
have \cite{Keller}
\begin{equation*}%
{\displaystyle\int\limits_{0}^{1}}
w_{n}\left(  -y\right)  dy=B_{n},\text{ \ \ \ }n>0. \label{24}%
\end{equation*}
Moreover, we note that \cite{Kargin2}
\begin{equation*}%
{\displaystyle\int\limits_{0}^{1}}
\left(  1-y\right)  ^{p}w_{n}\left(  -y\right)  dy=\frac{1}{p+1}B_{n,p},
\label{37}%
\end{equation*}
where $B_{n,p}$ are $p$-Bernoulli numbers \cite{Rahmani} (see Section 2 for
definitions). The congruence identities of geometric numbers is also one of
the subjects studied. Gross \cite{Gross} showed that
\[
w_{n+4}=w_{n}\text{ }\left(  \operatorname{mod}10\right)  ,
\]
which was generalized by Kauffman \cite{Kaufmann} later. Mez\H{o} \cite{Mezo2}
also gave an elementary proof for Gross' identity. Moreover, Diagana and Ma\"{\i}ga \cite{DM} used $p$-adic Laplace transform and $p$-adic integration
to give some congruences for geometric numbers. We refer the papers
\cite{B3, B1, B2, Dil2, Kargin1, MT} and the references therein more on geometric numbers and polynomials.

In the literature, there are numerous studies for the generalization of geometric
polynomials (e.g. \cite{Dil3, Dil1, Kargin3, Kargin4}). One of the natural extension
of geometric polynomials is the higher order geometric polynomials \cite{B}
\begin{equation}
w_{n}^{\left(  r\right)  }\left(  y\right)  =\sum_{k=0}^{n}%
\genfrac{\{}{\}}{0pt}{}{n}{k}%
\left(  r\right)  _{k}y^{k},\text{ }r>0, \label{20}%
\end{equation}
where $\left(  x\right)  _{n}$ is the Pochhammer symbol defined by $\left(
x\right)  _{n}=x\left(  x+1\right)  \cdots\left(  x+n-1\right)  $ with
$\left(  x\right)  _{0}=1.$ It is evident that $w_{n}^{\left(  1\right)
}\left(  y\right)  =w_{n}\left(  y\right)  $. The polynomials $w_{n}^{\left(
r\right)  }\left(  y\right)  $ have the property
\begin{equation*}
\left(  y\frac{d}{dy}\right)  ^{n}\frac{1}{\left(  1-y\right)  ^{r+1}}%
=\sum_{k=0}^{\infty}\binom{k+r}{k}k^{n}y^{k}=\frac{1}{\left(  1-y\right)
^{r+1}}w_{n}^{\left(  r+1\right)  }\left(  \frac{y}{1-y}\right)  , \label{22}%
\end{equation*}
for any $n,r=0,1,2,\ldots$, and may be defined by means of the exponential
generating function (\cite{B})%
\[
\sum_{n=0}^{\infty}w_{n}^{\left(  r\right)  }\left(  y\right)  \frac{t^{n}%
}{n!}=\left(  \frac{1}{1-y\left(  e^{t}-1\right)  }\right)  ^{r}.
\]
On the other hand, the higher order geometric polynomials and exponential (or
single variable Bell) polynomials
\[
\varphi_{n}\left(  y\right)  =\sum_{k=0}^{n}%
\genfrac{\{}{\}}{0pt}{}{n}{k}%
y^{k}%
\]
are connected by
\begin{equation}
w_{n}^{\left(  r\right)  }\left(  y\right)  =\frac{1}{\Gamma\left(  r\right)
}%
{\displaystyle\int\limits_{0}^{\infty}}
\lambda^{r-1}\varphi_{n}\left(  y\lambda\right)  e^{-\lambda}d\lambda\label{8}%
\end{equation}
(c.f. \cite{B}). According to this integral representation, several generating
functions and recurrence relations for higher order geometric polynomials were
obtained in \cite{BoyadzhievandDil}. Namely, $w_{n+m}^{\left(  r\right)
}\left(  y\right)  $ admits a recurrence relation according to the family
$\left\{  y^{j}w_{n}^{\left(  r+j\right)  }\left(  y\right)  \right\}  $ as
follows:%
\begin{equation}
w_{n+m}^{\left(  r\right)  }\left(  y\right)  =\sum_{k=0}^{n}\sum_{j=0}%
^{m}\binom{n}{k}%
\genfrac{\{}{\}}{0pt}{}{m}{j}%
\left(  r\right)  _{j}j^{n-k}y^{j}w_{k}^{\left(  r+j\right)  }\left(
y\right)  . \label{23}%
\end{equation}

Setting $y=1$ in (\ref{20}), we have higher order geometric numbers
$w_{n}^{\left(  r\right)  }.$ The higher order geometric numbers and geometric
numbers are connected with $w_{n}^{\left(  1\right)  }=w_{n}$ and the formula
\begin{equation}
w_{n}^{\left(  r\right)  }=\frac{1}{r!2^{r}}\sum_{k=0}^{r}%
\genfrac{[}{]}{0pt}{}{r+1}{k+1}%
w_{n+k}, \label{29}%
\end{equation}
which was proved by a combinatorial method in \cite[Theorem 2]{AHLBACH}. Here,
$%
\genfrac{[}{]}{0pt}{}{n}{k}%
$ is the Stirling number of the first kind (\cite{Graham}). Moreover, some
congruence identities for the higher order geometric numbers can also be found
in the recent work \cite{DM}.

In this paper, dealing with two-variable geometric polynomials defined in
\cite{Kim} by%
\begin{equation}
\sum_{n=0}^{\infty}w_{n}^{\left(  r\right)  }\left(  x;y\right)  \frac{t^{n}%
}{n!}=\left(  \frac{1}{1-y\left(  e^{t}-1\right)  }\right)  ^{r}e^{xt},
\label{1}%
\end{equation}
we obtain new recurrence relations, an explicit formula, and a result
generalizing (c.f. \cite{BoyadzhievandDil})%
\begin{equation*}
\sum_{k=0}^{n}\binom{n}{k}w_{k}^{\left(  r\right)  }\left(  y\right)
w_{n-k}\left(  y\right)  =\frac{w_{n+1}^{\left(  r\right)  }\left(  y\right)
+rw_{n}^{\left(  r\right)  }\left(  y\right)  }{r\left(  1+y\right)  },
\label{21}%
\end{equation*}
for higher order geometric polynomials. We particularly use the explicit
formula to obtain an integral representation similar to (\ref{8}) involving
$r$-Bell polynomials, which are defined in \cite{Mezo1} as
\begin{equation}
\varphi_{n,r}\left(  y\right)  =\sum_{k=0}^{n}%
\genfrac{\{}{\}}{0pt}{}{n+r}{k+r}%
_{r}y^{k}, \label{35}%
\end{equation}
where $%
\genfrac{\{}{\}}{0pt}{}{n+r}{k+r}%
_{r}$ are $r$-Stirling numbers of the second kind (\cite{Broder}). The
resulting integral representation enables us to utilize some properties of
$r$-Bell polynomials for higher order geometric polynomials. In particular, we
evaluate the infinite sum
\[
\sum_{k=0}^{\infty}\left(  k+r\right)  ^{n}\binom{k+r-1}{k}x^{k}%
\]
in terms of higher order geometric polynomials, obtain an ordinary generating
function for higher order geometric polynomials, introduce a new recurrence
for $w_{n+m}^{\left(  r\right)  }\left(  y\right)  $, and generalize
(\ref{29}). Moreover, we give an integral representation relating the higher order geometric polynomials and $p$-Bernoulli
numbers, and express properties of $p$-Bernoulli numbers originated from those for the higher order geometric polynomials. We state and prove these results in Section 3,
followed by Section 2, in which we summarize known results that we need in
this paper. Section 4 is the last section of this paper where we prove
congruences for higher order geometric polynomials and $p$-Bernoulli numbers
using some of the results presented in Section 3. Particularly, we state a von
Staudt-Clausen type congruence for $p$-Bernoulli numbers.

\section{Preliminaries}

\setcounter{theorem}{0} \setcounter{equation}{0}

The Stirling numbers of the first kind $%
\genfrac{[}{]}{0pt}{}{n}{k}%
$ can be defined by means of%
\[
x\left(  x+1\right)  \cdots\left(  x+n-1\right)  =\sum_{k=0}^{n}%
\genfrac{[}{]}{0pt}{}{n}{k}%
x^{k}%
\]
or by the generating function%
\[
\left(  -\log\left(  1-x\right)  \right)  ^{k}=k!\sum_{n=k}^{\infty}%
\genfrac{[}{]}{0pt}{}{n}{k}%
\frac{x^{k}}{k!}%
\]
(c.f. \cite{Comtet-1974, Graham}). It follows from either of these
definitions that%
\begin{equation}%
\genfrac{[}{]}{0pt}{}{n}{k}%
=\left(  n-1\right)
\genfrac{[}{]}{0pt}{}{n-1}{k}%
+%
\genfrac{[}{]}{0pt}{}{n-1}{k-1}
\label{31}%
\end{equation}
with%
\[%
\genfrac{[}{]}{0pt}{}{n}{0}%
=0\text{ if }n>0\text{ and }%
\genfrac{[}{]}{0pt}{}{n}{k}%
=0\text{ if }k>n\text{ or }k<0.
\]
We note the following special values which will be used in the sequel:
\begin{align*}%
\genfrac{[}{]}{0pt}{}{n}{1}%
&  =1\text{, }%
\genfrac{[}{]}{0pt}{}{n}{1}%
=\left(  n-1\right)  !\text{ if }n>0\text{, }\\%
\genfrac{[}{]}{0pt}{}{n}{n-1}%
&  =\binom{n}{2}\text{, }%
\genfrac{[}{]}{0pt}{}{n}{n-2}%
=\frac{3n-1}{4}\binom{n}{3}\text{, and }%
\genfrac{[}{]}{0pt}{}{n}{n-3}%
=\binom{n}{2}\binom{n}{4}.
\end{align*}

Many properties of $%
\genfrac{[}{]}{0pt}{}{n}{k}%
$ can be found in \cite[pp. 214-219]{Comtet-1974}. In particular, we have%
\[
k%
\genfrac{[}{]}{0pt}{}{n}{k}%
=\sum_{i=k-1}^{n-1}\binom{n}{i}\left(  n-i-1\right)  !%
\genfrac{[}{]}{0pt}{}{i}{k-1}%
.
\]
This equality can be used to obtain some congruences for $%
\genfrac{[}{]}{0pt}{}{n}{k}%
$. For example, if we take $n=q$, where $q$ is a prime number, then%
\begin{equation}%
\genfrac{[}{]}{0pt}{}{q}{k}%
\equiv0\text{ }\left(  \operatorname{mod}q\right)  \text{ \ \ }\left(
k=2,3,\ldots,q-1\right)  ,\label{32}%
\end{equation}
since%
\[
\binom{q}{i}\equiv0\text{ }\left(  \operatorname{mod}q\right)  \text{
\ \ }\left(  i=1,2,\ldots,q-1\right)  .
\]

The Stirling numbers of the second kind $%
\genfrac{\{}{\}}{0pt}{}{n}{k}%
$ can be defined by means of%
\[
x^{n}=\sum_{k=0}^{n}%
\genfrac{\{}{\}}{0pt}{}{n}{k}%
x\left(  x-1\right)  \cdots\left(  x-k+1\right)  ,
\]
or by the generating function%
\[
\left(  \mathrm{e}^{x}-1\right)  ^{k}=k!\sum_{n=k}^{\infty}%
\genfrac{\{}{\}}{0pt}{}{n}{k}%
\frac{x^{n}}{n!}%
\]
(c.f. \cite{Comtet-1974, Graham}). It follows from the generating function
that%
\[%
\genfrac{\{}{\}}{0pt}{}{n}{k}%
=%
\genfrac{\{}{\}}{0pt}{}{n-1}{k-1}%
+k%
\genfrac{\{}{\}}{0pt}{}{n-1}{k}%
,
\]
with%
\begin{align*}%
\genfrac{\{}{\}}{0pt}{}{n}{0}%
&  =0\text{ if }n>0\text{, \ \ }%
\genfrac{\{}{\}}{0pt}{}{n}{k}%
=0\text{ if }k>n\text{ or }k<0\\%
\genfrac{\{}{\}}{0pt}{}{n}{n}%
&  =1\text{, \ }%
\genfrac{\{}{\}}{0pt}{}{n}{1}%
=1\text{ if }n>0.
\end{align*}
We note the following well-known identity for $%
\genfrac{\{}{\}}{0pt}{}{n}{k}%
$ for future reference:%
\begin{equation}%
\genfrac{\{}{\}}{0pt}{}{n}{k}%
=\frac{1}{k!}\sum_{j=1}^{k}\left(  -1\right)  ^{k-j}\binom{k}{j}%
j^{n}.\label{36}%
\end{equation}

Performing the product of two generating functions for $%
\genfrac{\{}{\}}{0pt}{}{n}{k}%
$, we obtain the convolution formula (\cite{Hsu-Shiue-1998})%
\[
\binom{k_{1}+k_{2}}{k_{1}}%
\genfrac{\{}{\}}{0pt}{}{n}{k_{1}+k_{2}}%
=\sum_{m=0}^{n}\binom{n}{m}%
\genfrac{\{}{\}}{0pt}{}{m}{k_{1}}%
\genfrac{\{}{\}}{0pt}{}{n-m}{k_{2}}%
.
\]
Letting $k=k_{1}+k_{2}$ and $n=q$, a prime number, we deduce that%
\begin{equation}%
\genfrac{\{}{\}}{0pt}{}{q}{k}%
\equiv0\text{ }\left(  \operatorname{mod}q\right)  \text{ \ \ }\left(
k=2,3,\ldots,q-1\right)  ,\label{33}%
\end{equation}
since again%
\[
\binom{q}{i}\equiv0\text{ }\left(  \operatorname{mod}q\right)  \text{
\ \ }\left(  i=1,2,\ldots,q-1\right)  ,
\]
and $1<k<q$.

Stirling numbers have been generalized in many ways. One of them is called
$r$-Stirling numbers (or weighted Stirling numbers). $r$-Stirling numbers of
the second kind $%
\genfrac{\{}{\}}{0pt}{}{n}{k}%
_{r}$ can be defined by means of the generating function (see \cite{Broder})%
\begin{equation}
\left(  \mathrm{e}^{x}-1\right)  ^{k}\mathrm{e}^{rx}=k!\sum_{n=k}^{\infty}%
\genfrac{\{}{\}}{0pt}{}{n}{k}%
_{r}\frac{x^{n}}{n!}. \label{34}%
\end{equation}

The Bernoulli numbers $B_{n}$ are defined by the generating function%
\[
\frac{t}{\mathrm{e}^{t}-1}=\sum_{n=0}^{\infty}B_{n}\frac{t^{n}}{n!}%
\]
or by the equivalent recursion%
\[
B_{0}=1\text{ \ \ and \ \ }\sum_{k=0}^{n-1}\frac{B_{k}}{k!\left(  n-k\right)
!}=0\text{ for }n\geq2.
\]
The first values are%
\[
B_{1}=-\frac{1}{2},B_{2}=\frac{1}{6},B_{4}=-\frac{1}{30},B_{6}=\frac{1}{42},
\]
and $B_{2k+1}=0$ for $k\geq1$. The denominators of the Bernoulli numbers can
be completely determined due to von Staudt-Clausen theorem: for any integer
$n\geq1$, $B_{2n}$ can be written as%
\[
B_{2n}=A_{2n}-\sum_{q:\left(  q-1\right)  |2n}\frac{1}{q},
\]
where $A_{2n}$ is an integer and the sum runs over all the prime numbers such
that $\left(  q-1\right)  |2n$. It can be stated equivalently as%
\[
qB_{2n}\equiv%
\genfrac{\{}{.}{0pt}{}{0\text{ }\left(  \operatorname{mod}q\right)  ,\text{ if
}\left(  q-1\right)  \nmid2n,}{-1\text{ }\left(  \operatorname{mod}q\right)
,\text{ if }\left(  q-1\right)  \mid2n.}%
\]
We note that this classification is also valid for $B_{1}$.

Many generalizations of Bernoulli numbers appear in the literature. One
generalization is the $p$-Bernoulli numbers $B_{n,p}$, which are due to
Rahmani \cite{Rahmani}, defined by means of the generating function%
\[
\sum_{n=0}^{\infty}B_{n,p}\frac{x^{n}}{n!}=\text{ }_{2}F_{1}\left(
1,1;p+2,1-\mathrm{e}^{t}\right)  ,
\]
where $_{2}F_{1}\left(  a,b;c;z\right)  $ is the Gaussian hypergeometric
function%
\[
_{2}F_{1}\left(  a,b;c;z\right)  =\sum_{k=0}^{\infty}\frac{\left(  a\right)
_{k}\left(  b\right)  _{k}}{\left(  c\right)  _{k}}\frac{z^{k}}{k!}.
\]
$p$-Bernoulli numbers are related to Bernoulli numbers in that $B_{n,0}=B_{n}$
and%
\begin{equation}
\sum_{k=0}^{p}%
\genfrac{[}{]}{0pt}{}{p}{k}%
\left(  -1\right)  ^{k}B_{n+k}=\frac{p!}{p+1}B_{n,p},\text{ for }n,p\geq0,
\label{17}%
\end{equation}
and satisfy an explicit formula of the form%
\begin{equation}
B_{n,p}=\frac{p+1}{p!}\sum_{k=0}^{n}%
\genfrac{\{}{\}}{0pt}{}{n+p}{k+p}%
_{p}\frac{\left(  -1\right)  ^{k}\left(  k+p\right)  !}{k+p+1}. \label{12}%
\end{equation}

\section{Recurrence Relations}

\setcounter{theorem}{0} \setcounter{equation}{0}

From the generating function for higher order two-variable geometric
polynomials (\ref{1}) we have
\begin{equation}
w_{n}^{\left(  r\right)  }\left(  x;y\right)  =\sum_{k=0}^{n}\binom{n}{k}%
w_{k}^{\left(  r\right)  }\left(  y\right)  x^{n-k}. \label{14}%
\end{equation}
Then, it is obvious that
\begin{align*}
w_{n}^{\left(  r\right)  }\left(  0;y\right)   &  =w_{n}^{\left(  r\right)
}\left(  y\right)  ,\text{ }w_{n}^{\left(  1\right)  }\left(  x;y\right)
=w_{n}\left(  x;y\right)  ,\text{ }\\
w_{n}^{\left(  r\right)  }\left(  0;1\right)   &  =w_{n}^{\left(  r\right)
}\text{ and }w_{n}^{\left(  1\right)  }\left(  0;1\right)  =w_{n}.
\end{align*}

Setting $x+r$ instead of $x$ in (\ref{1}), we have
\[
w_{n}^{\left(  r\right)  }\left(  x+r;y\right)  =\left(  -1\right)  ^{n}%
w_{n}^{\left(  r\right)  }\left(  -x;-y-1\right)  ,\text{ for }n\geq0.
\]
Then, for $x=0,$ we conclude that
\begin{equation}
w_{n}^{\left(  r\right)  }\left(  r;y\right)  =\left(  -1\right)  ^{n}%
w_{n}^{\left(  r\right)  }\left(  -y-1\right)  , \label{2}%
\end{equation}
a relationship between two-variable and single variable higher order geometric polynomials.

\begin{proposition}
For $n\geq0$ and $r>0,$ we have the following recurrence formulas:%
\begin{equation}
\sum_{k=0}^{n}\binom{n}{k}w_{k}^{\left(  r\right)  }\left(  y\right)
r^{n-k}=\left(  -1\right)  ^{n}w_{n}^{\left(  r\right)  }\left(  -y-1\right)
\label{15}%
\end{equation}
and%
\begin{equation}
\sum_{k=0}^{n}\binom{n}{k}w_{k}^{\left(  r+1\right)  }\left(  y\right)
=\frac{1}{ry}w_{n+1}^{\left(  r\right)  }\left(  y\right)  . \label{16}%
\end{equation}

\end{proposition}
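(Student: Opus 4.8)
The plan is to derive both recurrences from relations already in hand. For (\ref{15}), I would combine (\ref{14}) with (\ref{2}) directly. Setting $x=r$ in (\ref{14}) gives
\[
w_{n}^{(r)}(r;y)=\sum_{k=0}^{n}\binom{n}{k}w_{k}^{(r)}(y)\,r^{n-k},
\]
and by (\ref{2}) the left-hand side equals $(-1)^{n}w_{n}^{(r)}(-y-1)$. Equating the two expressions yields (\ref{15}) with no further computation.

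For (\ref{16}) I would pass to exponential generating functions. Writing $F_{r}(t)=\bigl(1-y(e^{t}-1)\bigr)^{-r}$ for the generating function of $w_{n}^{(r)}(y)$, the first step is to recognize the left-hand side of (\ref{16}) as a binomial convolution: since $\sum_{k=0}^{n}\binom{n}{k}a_{k}$ has exponential generating function $e^{t}A(t)$ whenever $A(t)=\sum_{n}a_{n}t^{n}/n!$, the left side of (\ref{16}) is generated by $e^{t}F_{r+1}(t)$. The second step is to identify the right-hand side: since differentiation shifts the index, $w_{n+1}^{(r)}(y)$ is generated by $F_{r}'(t)$, so $\tfrac{1}{ry}w_{n+1}^{(r)}(y)$ is generated by $\tfrac{1}{ry}F_{r}'(t)$. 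It then suffices to verify the generating-function identity $\tfrac{1}{ry}F_{r}'(t)=e^{t}F_{r+1}(t)$.

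This final identity reduces to a single application of the chain rule:
\[
F_{r}'(t)=-r\bigl(1-y(e^{t}-1)\bigr)^{-r-1}(-ye^{t})=ry\,e^{t}F_{r+1}(t),
\]
so that $\tfrac{1}{ry}F_{r}'(t)=e^{t}F_{r+1}(t)$ as required, and comparing coefficients of $t^{n}/n!$ on both sides delivers (\ref{16}).

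Neither identity presents a serious obstacle. The only point demanding care is the index bookkeeping in the second part: one must remember that $F_{r}'(t)$ generates the shifted sequence $w_{n+1}^{(r)}(y)$ rather than $w_{n}^{(r)}(y)$, and that the factor $ry$ produced by the chain rule is precisely what cancels the $1/(ry)$ appearing on the right of (\ref{16}).
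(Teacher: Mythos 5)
Your proof is correct. For (\ref{15}) you do exactly what the paper does: set $x=r$ in (\ref{14}) and identify the left-hand side via (\ref{2}). For (\ref{16}), however, you take a genuinely different route. The paper deduces it from a two-variable recurrence cited from \cite[Theorem 3.4]{Kim} together with (\ref{14}); as transcribed there the recurrence reads $w_{n+1}^{(r)}(x;y)=w_{n}^{(r)}(x;y)+ry\,w_{n}^{(r+1)}(x+1;y)$, but the correct form carries a factor $x$ on the first term (without it the identity already fails at $n=0$, $r=1$), and it is the evaluation at $x=0$, killing that term, which yields $w_{n+1}^{(r)}(y)=ry\,w_{n}^{(r+1)}(1;y)$ and hence (\ref{16}) after (\ref{14}) identifies $w_{n}^{(r+1)}(1;y)$ with the binomial sum. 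You instead verify the underlying generating-function identity $F_{r}'(t)=ry\,e^{t}F_{r+1}(t)$ directly by the chain rule and compare coefficients. At heart these are the same computation, since Kim's recurrence is itself obtained by differentiating the two-variable generating function; but your version is self-contained, requires no external citation, and sidesteps the bookkeeping confusion in the paper's quotation of Kim's result (the missing factor $x$ and the phrase ``for $x=1$'' obscure which evaluation actually produces (\ref{16})). The paper's route buys brevity and places the identity as a corollary of a known two-variable recurrence; yours buys transparency and an argument that can be checked line by line from the definitions.
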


\begin{proof}
Combining (\ref{14}) and (\ref{2}), we have (\ref{15}). Furthermore, taking
$x=1$ in (\ref{14}) and using the recurrence relation presented in
\cite[Theorem 3.4]{Kim}%
\[
w_{n+1}^{\left(  r\right)  }\left(  x;y\right)  =w_{n}^{\left(  r\right)
}\left(  x;y\right)  +ryw_{n}^{\left(  r+1\right)  }\left(  x+1;y\right)
\]
for $x=1,$ we obtain (\ref{16}).
\end{proof}

\begin{theorem}
For every $n\geq0$ and every $r_{1},r_{2}>0$, we have the convolution identity%
\begin{equation}
\sum_{k=0}^{n}\binom{n}{k}w_{k}^{\left(  r_{1}\right)  }\left(  y\right)
w_{n-k}^{\left(  r_{2}\right)  }\left(  y\right)  =\frac{w_{n+1}^{\left(
r_{1}+r_{2}-1\right)  }\left(  y\right)  +\left(  r_{1}+r_{2}-1\right)
w_{n}^{\left(  r_{1}+r_{2}-1\right)  }\left(  y\right)  }{\left(  r_{1}%
+r_{2}-1\right)  \left(  1+y\right)  }. \label{11}%
\end{equation}

\end{theorem}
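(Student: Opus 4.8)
The plan is to prove the convolution identity (\ref{11}) by working directly with the exponential generating functions. From (\ref{1}) with $x=0$, the left-hand side is the Cauchy product of the sequences $\{w_k^{(r_1)}(y)\}$ and $\{w_k^{(r_2)}(y)\}$, so its exponential generating function is simply the product
\[
\left(\frac{1}{1-y(e^t-1)}\right)^{r_1}\left(\frac{1}{1-y(e^t-1)}\right)^{r_2}=\left(\frac{1}{1-y(e^t-1)}\right)^{r_1+r_2},
\]
which is the generating function of $w_n^{(r_1+r_2)}(y)$. Thus $\sum_{k=0}^n\binom{n}{k}w_k^{(r_1)}(y)w_{n-k}^{(r_2)}(y)=w_n^{(r_1+r_2)}(y)$, and it remains only to show that the right-hand side of (\ref{11}) also equals $w_n^{(r_1+r_2)}(y)$. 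Writing $r=r_1+r_2-1$, this reduces to the single identity
\[
w_{n+1}^{(r)}(y)+r\,w_n^{(r)}(y)=r(1+y)\,w_n^{(r+1)}(y),
\]
so the whole theorem collapses to proving this three-term relation connecting consecutive orders.

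First I would establish this reduced identity, and the cleanest route is again generating functions. Let $G_r(t)=\bigl(1-y(e^t-1)\bigr)^{-r}$, so that $\sum_n w_n^{(r)}(y)t^n/n!=G_r(t)$. Differentiating in $t$ gives
\[
G_r'(t)=r\,y\,e^t\,G_{r+1}(t),
\]
since $\frac{d}{dt}G_r=-r\bigl(1-y(e^t-1)\bigr)^{-r-1}\cdot(-y e^t)=ry\,e^t\,G_{r+1}$. The left side $G_r'(t)$ is the generating function of $w_{n+1}^{(r)}(y)$. On the right, $e^t G_{r+1}(t)$ is the product of $e^t$ (generating function of the all-ones sequence) with $G_{r+1}$, so its coefficient of $t^n/n!$ is $\sum_{k=0}^n\binom{n}{k}w_k^{(r+1)}(y)$; this is exactly the binomial transform appearing in (\ref{16}). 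Comparing coefficients of $t^n/n!$ therefore yields
\[
w_{n+1}^{(r)}(y)=r\,y\sum_{k=0}^n\binom{n}{k}w_k^{(r+1)}(y),
\]
which is precisely (\ref{16}) already proved in the preceding Proposition.

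It then remains to convert this into the additive three-term form. The key observation is that $\sum_{k=0}^n\binom{n}{k}w_k^{(r+1)}(y)$ should be expressible in terms of $w_n^{(r+1)}(y)$ and $w_n^{(r)}(y)$ alone. I would obtain this by manipulating the generating function relation $e^t G_{r+1}=\frac{1}{ry}G_r'$ together with the algebraic identity $(1+y)G_{r+1}=G_r+y\,e^t G_{r+1}$, which follows from $1-y(e^t-1)=(1+y)-y e^t$ after dividing by $\bigl(1-y(e^t-1)\bigr)^{r+1}$. Reading off coefficients of $t^n/n!$ in this last relation gives $(1+y)w_n^{(r+1)}(y)=w_n^{(r)}(y)+y\sum_{k=0}^n\binom{n}{k}w_k^{(r+1)}(y)$, and substituting (\ref{16}) for the final sum produces
\[
r(1+y)\,w_n^{(r+1)}(y)=r\,w_n^{(r)}(y)+w_{n+1}^{(r)}(y),
\]
exactly the reduced identity needed. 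Combined with the Cauchy-product computation of the left-hand side, this establishes (\ref{11}).

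I expect the main obstacle to be purely bookkeeping: keeping the shifts in the order parameter $r$ consistent and correctly identifying which binomial-transform sum corresponds to (\ref{16}). The underlying generating-function algebra is routine, and the only genuine content is recognizing the factorization $1-y(e^t-1)=(1+y)-y e^t$ that cleanly relates $G_r$ and $G_{r+1}$; once that is in hand the coefficient comparisons are mechanical.
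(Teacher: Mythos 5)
Your proof is correct, and it follows a genuinely different route from the paper's. You decompose (\ref{11}) into two independent facts: first, since the left-hand side is a binomial convolution, its exponential generating function is $G_{r_1}(t)G_{r_2}(t)=G_{r_1+r_2}(t)$, where $G_r(t)=\left(1-y(e^t-1)\right)^{-r}$, so the sum equals $w_n^{(r_1+r_2)}(y)$; second, the three-term relation $r(1+y)\,w_n^{(r+1)}(y)=w_{n+1}^{(r)}(y)+r\,w_n^{(r)}(y)$ with $r=r_1+r_2-1$, which you obtain by combining the derivative identity $G_r'(t)=rye^{t}G_{r+1}(t)$ (equivalently (\ref{16})) with the factorization $1-y(e^t-1)=(1+y)-ye^{t}$. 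The paper proves (\ref{11}) in one stroke but with heavier machinery: it applies the same derivative identity carrying along the factor $e^{xt}$ of the two-variable polynomials $w_n^{(r)}(x;y)$, deduces a convolution formula valid for arbitrary $x_1,x_2$, and only then specializes $x_1=r_1$, $x_2=r_2$ and invokes the reflection formula (\ref{2}) to return to one-variable polynomials. The two arguments share their core ingredients (multiplicativity of $G_r$ in the order $r$, and the derivative identity); where they part ways is in how the factor $e^{t}$ created by differentiation is eliminated: the paper absorbs it into the shift $x\mapsto x+1$ and removes it via (\ref{2}), whereas you cancel it algebraically using $(1+y)-ye^{t}$. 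Your version is more elementary and self-contained --- it needs neither the two-variable polynomials nor (\ref{2}) --- and it makes the content of the theorem transparent: the convolution is simply $w_n^{(r_1+r_2)}(y)$, so (\ref{11}) is at bottom a one-step order-reduction formula (the order-$r$ analogue of the $p=1$ case of (\ref{9})). What the paper's route buys in exchange is the intermediate identity for general $x_1,x_2$, a two-variable statement strictly more general than (\ref{11}) itself.
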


\begin{proof}
We first note that%
\[
yr\frac{e^{\left(  x+1\right)  t}}{\left(  1-y\left(  e^{t}-1\right)  \right)
^{r+1}}=\frac{d}{dt}\left(  \left(  \frac{1}{1-y\left(  e^{t}-1\right)
}\right)  ^{r}e^{xt}\right)  -\frac{xe^{xt}}{\left(  1-y\left(  e^{t}%
-1\right)  \right)  ^{r}}.
\]
Let $x=x_{1}+x_{2}-1$ and $r=r_{1}+r_{2}-1$. Then by (\ref{1}), product of two
infinite series, and formal differentiation under summation, we obtain that%
\begin{align*}
yr\frac{e^{\left(  x+1\right)  t}}{\left(  1-y\left(  e^{t}-1\right)  \right)
^{r+1}}  &  =y\left(  r_{1}+r_{2}-1\right)  \frac{e^{\left(  x_{1}%
+x_{2}\right)  t}}{\left(  1-y\left(  e^{t}-1\right)  \right)  ^{r_{1}+r_{2}}%
}\label{13}\\
&  =y\left(  r_{1}+r_{2}-1\right)  \sum_{n=0}^{\infty}w_{n}^{\left(
r_{1}\right)  }\left(  x_{1};y\right)  \frac{t^{n}}{n!}\sum_{n=0}^{\infty
}w_{n}^{\left(  r_{2}\right)  }\left(  x_{2};y\right)  \frac{t^{n}}%
{n!}\nonumber\\
&  =y\left(  r_{1}+r_{2}-1\right)  \sum_{n=0}^{\infty}\left[  \sum_{k=0}%
^{n}\binom{n}{k}w_{k}^{\left(  r_{1}\right)  }\left(  x_{1};y\right)
w_{n-k}^{\left(  r_{2}\right)  }\left(  x_{2};y\right)  \right]  \frac{t^{n}%
}{n!},\nonumber
\end{align*}%
\[
\frac{xe^{xt}}{\left(  1-y\left(  e^{t}-1\right)  \right)  ^{r}}=\left(
x_{1}+x_{2}-1\right)  \sum_{n=0}^{\infty}w_{n}^{\left(  r_{1}+r_{2}-1\right)
}\left(  x_{1}+x_{2}-1;y\right)  \frac{t^{n}}{n!},
\]
and
\[
\frac{d}{dt}\left(  \left(  \frac{1}{1-y\left(  e^{t}-1\right)  }\right)
^{r}e^{xt}\right)  =\sum_{n=0}^{\infty}w_{n+1}^{\left(  r_{1}+r_{2}-1\right)
}\left(  x_{1}+x_{2}-1;y\right)  \frac{t^{n}}{n!}.
\]
Equating coefficients of $\frac{t^{n}}{n!}$ on both sides, we derive that%
\begin{align*}
&  \sum_{k=0}^{n}\binom{n}{k}w_{k}^{\left(  r_{1}\right)  }\left(
x_{1};y\right)  w_{n-k}^{\left(  r_{2}\right)  }\left(  x_{2};y\right) \\
&  \quad=\frac{1}{y\left(  r_{1}+r_{2}-1\right)  }\left[  w_{n+1}^{\left(
r_{1}+r_{2}-1\right)  }\left(  x_{1}+x_{2}-1;y\right)  -\left(  x_{1}%
+x_{2}-1\right)  w_{n}^{\left(  r_{1}+r_{2}-1\right)  }\left(  x_{1}%
+x_{2}-1;y\right)  \right]  .
\end{align*}
Setting $x_{1}=r_{1}$, $x_{2}=r_{2}$ and using (\ref{2}) we obtain the
convolution formula (\ref{11}).
\end{proof}

In the following theorem we give a new explicit expression for higher order
geometric polynomials and numbers

\begin{theorem}
\label{teo1}For $n\geq0,$
\begin{equation}
w_{n}^{\left(  r\right)  }\left(  y\right)  =\sum_{k=0}^{n}%
\genfrac{\{}{\}}{0pt}{}{n+r}{k+r}%
_{r}\left(  r\right)  _{k}\left(  -1\right)  ^{n+k}\left(  y+1\right)  ^{k}.
\label{3}%
\end{equation}
In particular%
\[
w_{n}^{\left(  r\right)  }=\sum_{k=0}^{n}%
\genfrac{\{}{\}}{0pt}{}{n+r}{k+r}%
_{r}\left(  r\right)  _{k}\left(  -1\right)  ^{n+k}2^{k}.
\]

\end{theorem}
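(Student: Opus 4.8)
The plan is to prove (\ref{3}) by showing that its right-hand side has exponential generating function $\left(1-y\left(e^{t}-1\right)\right)^{-r}$, which is exactly the series defining $w_{n}^{\left(r\right)}\left(y\right)$ (the case $x=0$ of (\ref{1})); comparing coefficients of $t^{n}/n!$ then gives the identity, and the specialization $y=1$ (so that $w_{n}^{\left(r\right)}\left(1\right)=w_{n}^{\left(r\right)}$ and $y+1=2$) yields the displayed formula for the numbers. Accordingly, set
\[
S(t)=\sum_{n=0}^{\infty}\left[\sum_{k=0}^{n}\genfrac{\{}{\}}{0pt}{}{n+r}{k+r}_{r}\left(r\right)_{k}\left(-1\right)^{n+k}\left(y+1\right)^{k}\right]\frac{t^{n}}{n!}.
\]

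The first step is to dispose of the alternating sign. Writing $\left(-1\right)^{n+k}=\left(-1\right)^{n}\left(-1\right)^{k}$ and observing that the outer factor $\left(-1\right)^{n}$ is produced by the substitution $t\mapsto-t$, I would reduce the problem to evaluating
\[
G(t)=\sum_{k=0}^{\infty}\left(r\right)_{k}\left(-1\right)^{k}\left(y+1\right)^{k}\sum_{n=k}^{\infty}\genfrac{\{}{\}}{0pt}{}{n+r}{k+r}_{r}\frac{t^{n}}{n!},
\]
so that $S(t)=G(-t)$; interchanging the two sums is legitimate formally because $\genfrac{\{}{\}}{0pt}{}{n+r}{k+r}_{r}=0$ for $n<k$, which makes each coefficient of $t^{n}$ a finite sum. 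The inner sum is the $r$-Stirling generating function (\ref{34}), which in the index convention of (\ref{35}) reads $\sum_{n\geq k}\genfrac{\{}{\}}{0pt}{}{n+r}{k+r}_{r}t^{n}/n!=e^{rt}\left(e^{t}-1\right)^{k}/k!$. Substituting this, recognizing $\left(r\right)_{k}/k!=\binom{r+k-1}{k}$, and summing the generalized binomial series $\sum_{k\geq0}\binom{r+k-1}{k}z^{k}=\left(1-z\right)^{-r}$ with $z=-\left(y+1\right)\left(e^{t}-1\right)$, I would obtain
\[
G(t)=e^{rt}\sum_{k=0}^{\infty}\frac{\left(r\right)_{k}}{k!}\left[-\left(y+1\right)\left(e^{t}-1\right)\right]^{k}=e^{rt}\left(1+\left(y+1\right)\left(e^{t}-1\right)\right)^{-r}.
\]

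It then remains to compute $S(t)=G(-t)=e^{-rt}\left(1+\left(y+1\right)\left(e^{-t}-1\right)\right)^{-r}$ and to collapse it. Absorbing $e^{-rt}=\left(e^{t}\right)^{-r}$ into the $\left(-r\right)$th power and expanding the bracket gives
\[
e^{t}\left[1+\left(y+1\right)\left(e^{-t}-1\right)\right]=e^{t}+\left(y+1\right)\left(1-e^{t}\right)=1-y\left(e^{t}-1\right),
\]
whence $S(t)=\left(1-y\left(e^{t}-1\right)\right)^{-r}$, which completes the coefficient comparison. There is no deep obstacle here: the whole argument is a single generating-function computation. The only points that demand care are tracking the sign flip from $t\mapsto-t$ correctly through the generalized binomial expansion, and carrying out the final algebraic collapse to $1-y\left(e^{t}-1\right)$, which is the linchpin of the proof. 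Before trusting the coefficient comparison, I would sanity-check the entire chain against the low-order cases $n=0$ and $n=1$, where the formula must return $w_{0}^{\left(r\right)}\left(y\right)=1$ and $w_{1}^{\left(r\right)}\left(y\right)=ry$.
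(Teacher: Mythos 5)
Your proof is correct, and at its core it is the same generating-function computation as the paper's, run in the opposite direction. The paper sets $x=r$ in (\ref{1}), expands $\left(1-y\left(e^{t}-1\right)\right)^{-r}e^{rt}$ via the generalized binomial series and the $r$-Stirling generating function (\ref{34}) to obtain $w_{n}^{\left(r\right)}\left(r;y\right)=\sum_{k=0}^{n}\genfrac{\{}{\}}{0pt}{}{n+r}{k+r}_{r}\left(r\right)_{k}y^{k}$, and then invokes the previously established reflection formula (\ref{2}), $w_{n}^{\left(r\right)}\left(r;y\right)=\left(-1\right)^{n}w_{n}^{\left(r\right)}\left(-y-1\right)$, followed by the substitution $y\mapsto-\left(y+1\right)$. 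You instead start from the right-hand side of (\ref{3}), compute its exponential generating function using the same two ingredients ((\ref{34}) and the binomial series), and dispose of the factor $\left(-1\right)^{n}$ by $t\mapsto-t$; your final algebraic collapse $e^{t}\left[1+\left(y+1\right)\left(e^{-t}-1\right)\right]=1-y\left(e^{t}-1\right)$ is precisely the identity underlying (\ref{2}), so you have effectively inlined its proof rather than cited it. What your organization buys is self-containedness: the argument never needs the two-variable polynomials $w_{n}^{\left(r\right)}\left(x;y\right)$, only the single-variable generating function. What the paper's organization buys is reusability: the intermediate formula for $w_{n}^{\left(r\right)}\left(r;y\right)$ and the reflection (\ref{2}) are stated once and exploited again elsewhere (for instance in the convolution identity (\ref{11})). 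Both arguments are formally sound for the same reason, namely that $\genfrac{\{}{\}}{0pt}{}{n+r}{k+r}_{r}=0$ for $k>n$ makes every coefficient comparison a finite sum, and your low-order sanity checks ($w_{0}^{\left(r\right)}\left(y\right)=1$, $w_{1}^{\left(r\right)}\left(y\right)=ry$) do come out right.
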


\begin{proof}
Writing $x=r$ in (\ref{1}), employing the generalized binomial formula, and
using the generating function of $r$-Stirling numbers (\ref{34}) we have%
\begin{align*}
\sum_{n=0}^{\infty}w_{n}^{\left(  r\right)  }\left(  r;y\right)  \frac{t^{n}%
}{n!}  &  =\left(  \frac{1}{1-y\left(  e^{t}-1\right)  }\right)  ^{r}e^{rt}\\
&  =\sum_{k=0}^{\infty}\left(  r\right)  _{k}y^{k}\frac{\left(  e^{t}%
-1\right)  ^{k}}{k!}e^{rt}\\
&  =\sum_{k=0}^{\infty}\sum_{n=k}^{\infty}%
\genfrac{\{}{\}}{0pt}{}{n+r}{k+r}%
_{r}\left(  r\right)  _{k}y^{k}\frac{t^{n}}{n!}\\
&  =\sum_{n=0}^{\infty}\left[  \sum_{k=0}^{n}%
\genfrac{\{}{\}}{0pt}{}{n+r}{k+r}%
_{r}\left(  r\right)  _{k}y^{k}\right]  \frac{t^{n}}{n!}.
\end{align*}
Comparing the coefficients of $\frac{t^{n}}{n!}$ we obtain
\[
w_{n}^{\left(  r\right)  }\left(  r;y\right)  =\sum_{k=0}^{n}%
\genfrac{\{}{\}}{0pt}{}{n+r}{k+r}%
_{r}\left(  r\right)  _{k}y^{k}.
\]
Using (\ref{2}) and replacing $y$ with $-\left(  y+1\right)  ,$ we reach the
desired equation.
\end{proof}

Now, with use of Theorem \ref{teo1}, we connect higher order geometric
polynomials and $r$-Bell polynomials in the following lemma which will be
useful for the subsequent results.

\begin{lemma}
\label{lem1}For every $n\geq0$ and every $r>0,$ we have the integral
representation%
\begin{equation}
\left(  -1\right)  ^{n}w_{n}^{\left(  r\right)  }\left(  -y-1\right)
=\frac{1}{\Gamma\left(  r\right)  }%
{\displaystyle\int\limits_{0}^{\infty}}
\lambda^{r-1}\varphi_{n,r}\left(  y\lambda\right)  e^{-\lambda}d\lambda.
\label{4}%
\end{equation}

\end{lemma}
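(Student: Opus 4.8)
The plan is to start from the explicit formula for higher order geometric polynomials established in Theorem \ref{teo1}, namely
\[
\left(-1\right)^{n}w_{n}^{\left(r\right)}\left(-y-1\right)=\sum_{k=0}^{n}\genfrac{\{}{\}}{0pt}{}{n+r}{k+r}_{r}\left(r\right)_{k}\left(-1\right)^{2k}y^{k}=\sum_{k=0}^{n}\genfrac{\{}{\}}{0pt}{}{n+r}{k+r}_{r}\left(r\right)_{k}y^{k},
\]
where the $\left(-1\right)^{n+k}\left((-y-1)+1\right)^{k}=\left(-1\right)^{n+k}\left(-y\right)^{k}=\left(-1\right)^{n+2k}y^{k}$ simplification has been carried out and the overall $\left(-1\right)^{n}$ moved to the left. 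Comparing this with the definition (\ref{35}) of the $r$-Bell polynomial $\varphi_{n,r}\left(y\right)=\sum_{k=0}^{n}\genfrac{\{}{\}}{0pt}{}{n+r}{k+r}_{r}y^{k}$, the left side is exactly $\varphi_{n,r}\left(y\right)$ but with each coefficient weighted by the extra Pochhammer factor $\left(r\right)_{k}$. The task therefore reduces to realizing that $\left(r\right)_{k}$-weighting as a Gamma integral.

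First I would recall the standard integral identity $\left(r\right)_{k}=\frac{1}{\Gamma\left(r\right)}\int_{0}^{\infty}\lambda^{r+k-1}e^{-\lambda}d\lambda=\frac{\Gamma\left(r+k\right)}{\Gamma\left(r\right)}$, which holds for $r>0$ and every nonnegative integer $k$. Substituting this into the sum above gives
\[
\left(-1\right)^{n}w_{n}^{\left(r\right)}\left(-y-1\right)=\frac{1}{\Gamma\left(r\right)}\sum_{k=0}^{n}\genfrac{\{}{\}}{0pt}{}{n+r}{k+r}_{r}y^{k}\int_{0}^{\infty}\lambda^{r+k-1}e^{-\lambda}d\lambda.
\]
Next I would interchange the finite sum and the integral (legitimate since the sum is finite), pulling $\lambda^{r-1}$ out as a common factor so that $\lambda^{r+k-1}=\lambda^{r-1}\lambda^{k}$ and the remaining $\lambda^{k}$ pairs with $y^{k}$ to produce $\left(y\lambda\right)^{k}$. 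This yields
\[
\left(-1\right)^{n}w_{n}^{\left(r\right)}\left(-y-1\right)=\frac{1}{\Gamma\left(r\right)}\int_{0}^{\infty}\lambda^{r-1}\left[\sum_{k=0}^{n}\genfrac{\{}{\}}{0pt}{}{n+r}{k+r}_{r}\left(y\lambda\right)^{k}\right]e^{-\lambda}d\lambda,
\]
and recognizing the bracketed sum as $\varphi_{n,r}\left(y\lambda\right)$ by (\ref{35}) completes the identity (\ref{4}).

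I do not anticipate a serious obstacle here, since every step is elementary once Theorem \ref{teo1} is in hand; the argument is a direct analogue of the known integral representation (\ref{8}) connecting $w_{n}^{\left(r\right)}$ with the ordinary Bell polynomials $\varphi_{n}$, with $\varphi_{n}$ replaced by $\varphi_{n,r}$ and the exponent on $y+1$ tracked carefully. The one point requiring mild care is the sign bookkeeping: one must verify that the $\left(-1\right)^{n+k}\left(y+1\right)^{k}$ appearing in (\ref{3}), after the substitution $y\mapsto -y-1$ that turns $w_{n}^{\left(r\right)}\left(y\right)$ into $w_{n}^{\left(r\right)}\left(-y-1\right)$, collapses to a clean $\left(-1\right)^{n}y^{k}$ so that no residual signs obstruct the identification of the $r$-Bell polynomial. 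Once that simplification is confirmed, the Gamma-function substitution and the finite interchange of sum and integral deliver (\ref{4}) immediately.
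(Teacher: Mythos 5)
Your proposal is correct and is essentially the paper's own proof: both rest on Theorem \ref{teo1}, the substitution $y\mapsto -y-1$ in (\ref{3}), the Gamma integral $\int_{0}^{\infty}\lambda^{r+k-1}e^{-\lambda}\,d\lambda=\Gamma(r+k)=(r)_{k}\Gamma(r)$, and the definition (\ref{35}) of $\varphi_{n,r}$. The only difference is direction of presentation (the paper expands the integral and then invokes (\ref{3}), while you expand (\ref{3}) and assemble the integral), which is immaterial.
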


\begin{proof}
By (\ref{35}) we have%
\begin{align*}%
{\displaystyle\int\limits_{0}^{\infty}}
\lambda^{r-1}\varphi_{n,r}\left(  y\lambda\right)  e^{-\lambda}d\lambda &
=\sum_{k=0}^{n}%
\genfrac{\{}{\}}{0pt}{}{n+r}{k+r}%
_{r}y^{k}%
{\displaystyle\int\limits_{0}^{\infty}}
\lambda^{r+k-1}e^{-\lambda}d\lambda\\
&  =\sum_{k=0}^{n}%
\genfrac{\{}{\}}{0pt}{}{n+r}{k+r}%
_{r}\Gamma\left(  r+k\right)  y^{k}.
\end{align*}
Using (\ref{3}) in the above yields the desired equation.
\end{proof}

Higher order geometric polynomials are seen in the evaluation of the infinite
series
\[
\sum_{k=0}^{\infty}k^{n}\binom{k+r-1}{k}x^{k}=\frac{1}{\left(  1-x\right)
^{r}}w_{n}^{\left(  r\right)  }\left(  \frac{x}{1-x}\right)
\]
If we apply Lemma \ref{lem1} to the Dobinski's formula for $r$-Bell
polynomials
\[
\varphi_{n,r}\left(  y\right)  =\frac{1}{e^{y}}\sum_{n=0}^{\infty}%
\frac{\left(  k+r\right)  ^{n}}{k!}x^{k},
\]
we can evaluate a new infinite series in terms of higher order geometric polynomials.

\begin{theorem}
For every $n\geq0$ and every $r>0,\left\vert y\right\vert <1$%
\[
\sum_{k=0}^{\infty}\left(  k+r\right)  ^{n}\binom{k+r-1}{k}y^{k}=\frac{\left(
-1\right)  ^{n}}{\left(  1-y\right)  ^{r}}w_{n}^{\left(  r\right)  }\left(
\frac{1}{y-1}\right)  .
\]

\end{theorem}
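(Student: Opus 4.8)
The plan is to compute the $\lambda$-integral in Lemma~\ref{lem1} by substituting Dobinski's formula for the $r$-Bell polynomial and interchanging the order of summation and integration. To keep the two roles of the variable apart, I would first rewrite (\ref{4}) with its argument renamed to $u$, and then insert $\varphi_{n,r}(u\lambda)=e^{-u\lambda}\sum_{k=0}^{\infty}\frac{(k+r)^{n}}{k!}(u\lambda)^{k}$. This produces
\[
(-1)^{n}w_{n}^{(r)}(-u-1)=\frac{1}{\Gamma(r)}\int_{0}^{\infty}\lambda^{r-1}e^{-(1+u)\lambda}\sum_{k=0}^{\infty}\frac{(k+r)^{n}}{k!}u^{k}\lambda^{k}\,d\lambda .
\]
Swapping the sum and the integral, each term becomes a Gamma integral $\int_{0}^{\infty}\lambda^{r+k-1}e^{-(1+u)\lambda}\,d\lambda=\Gamma(r+k)/(1+u)^{r+k}$, valid whenever $\operatorname{Re}(1+u)>0$.

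Next I would simplify the coefficients. Using $\Gamma(r+k)/(k!\,\Gamma(r))=\binom{k+r-1}{k}$ and pulling the factor $(1+u)^{-r}$ out of the sum, the expression collapses to
\[
(-1)^{n}w_{n}^{(r)}(-u-1)=\frac{1}{(1+u)^{r}}\sum_{k=0}^{\infty}(k+r)^{n}\binom{k+r-1}{k}\left(\frac{u}{1+u}\right)^{k}.
\]
It then remains only to invert the change of argument. Setting $y=\frac{u}{1+u}$, equivalently $u=\frac{y}{1-y}$, gives $1+u=\frac{1}{1-y}$, hence $\frac{1}{(1+u)^{r}}=(1-y)^{r}$ and $-u-1=-\frac{1}{1-y}=\frac{1}{y-1}$. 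Substituting, the identity becomes $(-1)^{n}w_{n}^{(r)}\!\big(\tfrac{1}{y-1}\big)=(1-y)^{r}\sum_{k\ge 0}(k+r)^{n}\binom{k+r-1}{k}y^{k}$, and dividing through by $(1-y)^{r}$ yields the claimed formula.

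The hard part is justifying the interchange of summation and integration and pinning down the domain. The swap is legitimate as soon as the associated double series/integral converges absolutely, which by the Gamma-integral estimate amounts to $|u|<\operatorname{Re}(1+u)=1+\operatorname{Re}(u)$. For real $u>-\tfrac12$---equivalently real $y\in(-1,1)$---this reduces to $|y|<1$ and holds throughout the interval, so Fubini's theorem (or dominated convergence) applies and the identity is valid for real $y\in(-1,1)$; note that $\operatorname{Re}(1+u)>0$ there also secures convergence of each Gamma integral. To reach the full stated range $|y|<1$ I would then appeal to analytic continuation: the left-hand side is a power series of radius of convergence $1$ (the coefficients grow only polynomially), while the right-hand side is analytic in the disk $|y|<1$, being the polynomial $w_{n}^{(r)}$ evaluated at $\frac{1}{y-1}$ divided by the analytic function $(1-y)^{r}$. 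Agreeing on the real interval $(-1,1)$, the two analytic functions coincide on the whole disk by the identity theorem. Everything else is routine bookkeeping.
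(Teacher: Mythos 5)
Your proof is correct and takes essentially the same route the paper intends: the paper states this theorem without a written proof, offering only the preceding remark that it follows by applying Lemma \ref{lem1} to Dobinski's formula for $r$-Bell polynomials, which is exactly your substitution, term-by-term Gamma integration with $\Gamma(r+k)/(k!\,\Gamma(r))=\binom{k+r-1}{k}$, and the change of variable $y=u/(1+u)$. Your justification of the sum--integral interchange and the analytic continuation to the full disk $\left\vert y\right\vert <1$ merely supplies details the paper leaves implicit.
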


Next we introduce ordinary generating function for higher order geometric polynomials.

\begin{theorem}
For real $y<-\frac{1}{2}$, the higher order
geometric polynomials have the generating function%
\[
\sum_{n=0}^{\infty}w_{n}^{\left(  r\right)  }\left(  y\right)  t^{n}%
=\frac{\left(  -1\right)  ^{r}}{\left(  1+rt\right)  y^{r}}\text{ }_{2}%
F_{1}\left(  \frac{rt+1}{t},r;\frac{rt+t+1}{t};\frac{y+1}{y}\right)  .
\]

\end{theorem}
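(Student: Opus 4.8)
The plan is to realise the ordinary generating function as the Laplace (Borel) transform of the known exponential generating function, and then to reduce the resulting integral to Euler's integral representation of $_{2}F_{1}$. The starting point is the elementary identity $t^{n}=\frac{1}{n!}\int_{0}^{\infty}(ts)^{n}e^{-s}\,ds$, which follows from $\int_{0}^{\infty}s^{n}e^{-s}\,ds=n!$. Multiplying by $w_{n}^{(r)}(y)$, summing over $n$, and interchanging sum and integral, I would obtain
\[
\sum_{n=0}^{\infty}w_{n}^{(r)}(y)\,t^{n}=\int_{0}^{\infty}e^{-s}\sum_{n=0}^{\infty}w_{n}^{(r)}(y)\frac{(ts)^{n}}{n!}\,ds=\int_{0}^{\infty}e^{-s}\left(\frac{1}{1-y(e^{ts}-1)}\right)^{r}ds,
\]
where the last equality uses the exponential generating function (\ref{1}) with $x=0$. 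For $t>0$ small and $y<-\tfrac{1}{2}$ the integrand has no singularity on the positive axis and decays like $e^{-(1+rt)s}$ as $s\to\infty$ (since $1-y(e^{ts}-1)\sim -y\,e^{ts}$ with $-y>0$), which legitimises the termwise transform.

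Next I would evaluate the integral by the substitution $v=e^{-ts}$, which maps $(0,\infty)$ onto $(0,1)$ and gives $e^{-s}=v^{1/t}$, $ds=-dv/(tv)$, and $1-y(e^{ts}-1)=((1+y)v-y)/v$. Pulling the factor $(1+y)v-y=-y\bigl(1-\tfrac{y+1}{y}v\bigr)$ out of the bracket and simplifying the powers of $v$, this turns the transcendental integral into the algebraic one
\[
\sum_{n=0}^{\infty}w_{n}^{(r)}(y)\,t^{n}=\frac{(-y)^{-r}}{t}\int_{0}^{1}v^{\frac{1}{t}+r-1}\left(1-\frac{y+1}{y}v\right)^{-r}dv.
\]

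Finally I would recognise the last integral as Euler's representation
\[
{}_{2}F_{1}(a,b;c;z)=\frac{\Gamma(c)}{\Gamma(b)\Gamma(c-b)}\int_{0}^{1}v^{b-1}(1-v)^{c-b-1}(1-zv)^{-a}\,dv
\]
in the degenerate case $c=b+1$, where the factor $(1-v)^{c-b-1}$ disappears and the prefactor collapses to $\Gamma(b+1)/\Gamma(b)=b$. Matching exponents forces $b-1=\tfrac{1}{t}+r-1$, i.e. $b=\tfrac{rt+1}{t}$, together with $a=r$, $c=b+1=\tfrac{rt+t+1}{t}$, and $z=\tfrac{y+1}{y}$. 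Hence the integral equals $\tfrac{1}{b}\,{}_{2}F_{1}(a,b;b+1;z)=\tfrac{t}{rt+1}\,{}_{2}F_{1}\bigl(\tfrac{rt+1}{t},r;\tfrac{rt+t+1}{t};\tfrac{y+1}{y}\bigr)$, and substituting this back (using $(-y)^{-r}=(-1)^{r}y^{-r}$ and cancelling the factor $t$) yields the claimed formula. The hypothesis $y<-\tfrac{1}{2}$ is exactly what makes $|z|=\bigl|1+\tfrac{1}{y}\bigr|<1$, which guarantees both the convergence of the hypergeometric series and the validity of Euler's representation.

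The main obstacle is analytic rather than algebraic: justifying the termwise Laplace transform (the interchange of summation and integration) and pinning down the common range of $t$ and $y$ on which all three representations converge simultaneously. Once the domain $y<-\tfrac{1}{2}$ (with $t>0$ small) is fixed, the substitution and the identification with Euler's integral are routine; the only points demanding care are the degenerate parameter relation $c=b+1$ and the correct branch in writing $(-y)^{-r}=(-1)^{r}/y^{r}$.
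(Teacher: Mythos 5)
Your proof is correct, and it takes a genuinely different route from the paper's. The paper deduces the theorem from two prior ingredients: Mez\H{o}'s ordinary generating function for the $r$-Bell polynomials, $\sum_{n\geq0}\varphi_{n,r}(y)t^{n}=\frac{-1}{rt-1}e^{-y}\,{}_{1}F_{1}\left(\frac{rt-1}{t};\frac{rt+t-1}{t};y\right)$, and its own Lemma \ref{lem1}; it integrates Mez\H{o}'s identity against $\frac{1}{\Gamma(r)}\lambda^{r-1}e^{-\lambda}$ with argument $y\lambda$, expands the ${}_{1}F_{1}$ and integrates termwise (each $\int_{0}^{\infty}\lambda^{r+k-1}e^{-(y+1)\lambda}d\lambda=\Gamma(r+k)(1+y)^{-r-k}$ supplying the Pochhammer factor $(r)_{k}$ that upgrades ${}_{1}F_{1}$ to ${}_{2}F_{1}$), and finally substitutes $y\mapsto-(y+1)$, $t\mapsto-t$. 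You instead Borel-sum the exponential generating function (\ref{1}) directly via $t^{n}=\frac{1}{n!}\int_{0}^{\infty}(ts)^{n}e^{-s}ds$, evaluate the resulting Laplace integral with the substitution $v=e^{-ts}$, and invoke Euler's integral for ${}_{2}F_{1}$ in the degenerate case $c=b+1$. Your algebra checks out completely (the identification $a=r$, $b=\frac{rt+1}{t}$, $z=\frac{y+1}{y}$, the collapsing prefactor $b$, and the sign $(-y)^{-r}=(-1)^{r}y^{-r}$), and the two mechanisms are cousins: both use a Laplace-type transform to pass from an entire generating function to a hypergeometric one. What your route buys is self-containedness -- no appeal to Mez\H{o}'s theorem or to Lemma \ref{lem1} -- together with an explicit convergent integral $\int_{0}^{\infty}e^{-s}\left(1-y(e^{ts}-1)\right)^{-r}ds$ that gives the right-hand side honest analytic meaning; what the paper's buys is economy, since Lemma \ref{lem1} is already established there, and coherence with its program of transporting $r$-Bell identities to higher order geometric polynomials.

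One caveat, which is not a gap relative to the paper because the paper's proof shares it: your justification of the termwise transform is not literally valid. For generic $y<-\frac{1}{2}$ (say $y=-2$, $r=1$) the exponential generating function has a finite radius of convergence in $t$, so $w_{n}^{(r)}(y)$ grows factorially and $\sum_{n}w_{n}^{(r)}(y)t^{n}$ diverges for every $t\neq0$; the Fubini--Tonelli bound $\sum_{n}|w_{n}^{(r)}(y)|t^{n}<\infty$ fails, and the interchange cannot hold as an equality of convergent objects, since the integral on one side converges while the series on the other does not. The theorem -- in the paper and in your argument -- must therefore be read formally: the ${}_{2}F_{1}$ expression is the Borel sum of, equivalently generates the asymptotic expansion as $t\to0^{+}$ of, the left-hand side. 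Your proof in fact exhibits this Borel-sum interpretation more transparently than the paper's, whose starting point, Mez\H{o}'s ordinary generating function for $r$-Bell polynomials, is a divergent-series identity of exactly the same kind.
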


\begin{proof}
We start by observing the ordinary generating function for $r$-Bell
polynomials (\cite[Theorem 3.2]{Mezo1})%
\[
\sum_{n=0}^{\infty}\varphi_{n,r}\left(  y\right)  t^{n}=\frac{-1}{rt-1}%
\frac{1}{e^{y}}\text{ }_{1}F_{1}\left(  \frac{rt-1}{t};\frac{rt+t-1}%
{t};y\right)  .
\]
In light of the equation (\ref{4}), this equation can be written as%
\begin{align*}
\sum_{n=0}^{\infty}\left(  -1\right)  ^{n}w_{n}^{\left(  r\right)  }\left(
-y-1\right)  t^{n}  &  =\frac{1}{\left(  1-rt\right)  \Gamma\left(  r\right)
}%
{\displaystyle\int\limits_{0}^{\infty}}
\lambda^{r-1}e^{-\left(  y+1\right)  \lambda}\text{ }_{1}F_{1}\left(
\frac{rt-1}{t};\frac{rt+t-1}{t};y\lambda\right)  d\lambda\\
&  =\frac{1}{\left(  1-rt\right)  \Gamma\left(  r\right)  }\sum_{k=0}^{\infty
}\frac{\left(  \frac{rt-1}{t}\right)  _{k}}{\left(  \frac{rt+t-1}{t}\right)
_{k}}\frac{y^{k}}{k!}%
{\displaystyle\int\limits_{0}^{\infty}}
\lambda^{r+k-1}e^{-\left(  y+1\right)  \lambda}d\lambda\\
&  =\frac{1}{\left(  1-rt\right)  \left(  1+y\right)  ^{r}}\sum_{k=0}^{\infty
}\frac{\left(  \frac{rt-1}{t}\right)  _{k}\left(  r\right)  _{k}}{\left(
\frac{rt+t-1}{t}\right)  _{k}k!}\left(  \frac{y}{1+y}\right)  ^{k}\\
&  =\frac{1}{\left(  1-rt\right)  \left(  1+y\right)  ^{r}}\text{ }_{2}%
F_{1}\left(  \frac{rt-1}{t},r;\frac{rt+t-1}{t};\frac{y}{1+y}\right)  .
\end{align*}
We then replace $-\left(  y+1\right)  $ with $y$ and $-t$ with $t$ to obtain
the desired equation.
\end{proof}

Now, we give an alternative representation for $w_{n+m}^{\left(  r\right)
}\left(  y\right)  $, which also generalizes (\ref{29}) in the following theorem.

\begin{theorem}
For all nonnegative integers $n$, $m$, $r$ and $p$, we have%
\begin{equation}
w_{n+m}^{\left(  r\right)  }\left(  y\right)  =\sum_{k=0}^{m}%
\genfrac{\{}{\}}{0pt}{}{m+r}{k+r}%
_{r}\left(  r\right)  _{k}\left(  -1\right)  ^{m+k}\left(  y+1\right)
^{k}w_{n}^{\left(  r+k\right)  }\left(  y\right)  \label{5}%
\end{equation}
and%
\begin{equation}
w_{n}^{\left(  r+p\right)  }\left(  y\right)  =\frac{1}{\left(  p\right)
_{r}\left(  1+y\right)  ^{p}}\sum_{k=0}^{p}%
\genfrac{[}{]}{0pt}{}{p+r}{k+r}%
_{r}w_{n+k}^{\left(  r\right)  }\left(  y\right)  . \label{7}%
\end{equation}

\end{theorem}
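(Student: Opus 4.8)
The plan is to prove the shift–order formula (\ref{5}) by a two–variable exponential generating function computation, and then to obtain (\ref{7}) from (\ref{5}) by inverting the resulting $r$-Stirling transform. Throughout I write $g(t)=\left(1-y\left(e^{t}-1\right)\right)^{-1}$ and $G_{r}(t)=g(t)^{r}$, so that by (\ref{1}) with $x=0$ one has $\sum_{N\geq0}w_{N}^{\left(r\right)}\left(y\right)\frac{t^{N}}{N!}=G_{r}(t)$ and $G_{r+k}(t)=G_{r}(t)g(t)^{k}$.

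First I would compute the bivariate generating function of the left-hand side of (\ref{5}). Reindexing by $N=n+m$ and using the binomial theorem,
\begin{equation*}
\sum_{n,m\geq0}w_{n+m}^{\left(r\right)}\left(y\right)\frac{t^{n}}{n!}\frac{s^{m}}{m!}=\sum_{N\geq0}w_{N}^{\left(r\right)}\left(y\right)\frac{\left(t+s\right)^{N}}{N!}=G_{r}\left(t+s\right).
\end{equation*}
The heart of the argument is to show that the right-hand side of (\ref{5}) produces the same generating function. Multiplying the right-hand side by $\frac{t^{n}}{n!}\frac{s^{m}}{m!}$, summing, and separating the $k$-summation, the $n$-sum yields $G_{r+k}(t)=G_{r}(t)g(t)^{k}$, while the alternating $s$-sum is evaluated by the $r$-Stirling generating function (\ref{34}) at argument $-s$: after extracting the factor $\left(r\right)_{k}\left(y+1\right)^{k}$ it equals $\frac{1}{k!}\left(1-e^{-s}\right)^{k}e^{-rs}$.

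Collecting the terms, the right-hand generating function becomes
\begin{equation*}
e^{-rs}G_{r}(t)\sum_{k\geq0}\frac{\left(r\right)_{k}}{k!}\left[\left(y+1\right)\left(1-e^{-s}\right)g(t)\right]^{k}=e^{-rs}G_{r}(t)\left[1-\left(y+1\right)\left(1-e^{-s}\right)g(t)\right]^{-r},
\end{equation*}
where I have used the generalized binomial series $\sum_{k\geq0}\frac{\left(r\right)_{k}}{k!}z^{k}=\left(1-z\right)^{-r}$. The remaining and most delicate step is the algebraic simplification. Since $g(t)^{-1}=1-y\left(e^{t}-1\right)$, combining the factor $g(t)^{r}$ with the bracket gives $\left[\left(y+1\right)e^{-s}-ye^{t}\right]^{-r}$, and the leftover $e^{-rs}$ turns this into $\left[\left(y+1\right)-ye^{t+s}\right]^{-r}=G_{r}\left(t+s\right)$. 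Matching coefficients of $\frac{t^{n}}{n!}\frac{s^{m}}{m!}$ then yields (\ref{5}). I expect the tracking of the powers of $g(t)$ and this final cancellation to be the main obstacle; everything else is a routine application of (\ref{34}) and the binomial series.

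To derive (\ref{7}), I would regard (\ref{5}) with $m=p$ as a linear transform valid for all $p\geq0$. Setting $\widetilde{A}_{k}=\left(-1\right)^{k}\left(r\right)_{k}\left(y+1\right)^{k}w_{n}^{\left(r+k\right)}\left(y\right)$ and $\widetilde{B}_{p}=\left(-1\right)^{p}w_{n+p}^{\left(r\right)}\left(y\right)$, formula (\ref{5}) reads $\widetilde{B}_{p}=\sum_{k=0}^{p}\genfrac{\{}{\}}{0pt}{}{p+r}{k+r}_{r}\widetilde{A}_{k}$. Applying the orthogonality of the $r$-Stirling numbers (\cite{Broder}), namely $\sum_{k}\left(-1\right)^{p-k}\genfrac{[}{]}{0pt}{}{p+r}{k+r}_{r}\genfrac{\{}{\}}{0pt}{}{k+r}{j+r}_{r}=\delta_{pj}$, inverts this triangular system to $\widetilde{A}_{p}=\sum_{k=0}^{p}\left(-1\right)^{p-k}\genfrac{[}{]}{0pt}{}{p+r}{k+r}_{r}\widetilde{B}_{k}$. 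Substituting back the definitions, the signs cancel and the rising-factorial weight $\left(r\right)_{p}$ together with $\left(1+y\right)^{p}$ is freed from $w_{n}^{\left(r+p\right)}\left(y\right)$, producing (\ref{7}). The only point requiring care here is the sign and Pochhammer bookkeeping in the inversion; the orthogonality itself is a standard property of $r$-Stirling numbers.
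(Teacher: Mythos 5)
Your proposal is correct, but it takes a genuinely different route from the paper's. The paper deduces both identities from Lemma \ref{lem1}: it imports two linear recurrences for $r$-Bell polynomials from Mihoubi and Belbachir (their Eqs.~(8) and (11)), substitutes them into the integral representation (\ref{4}), and lets the Gamma integral manufacture the factor $\left(r\right)_{k}=\Gamma\left(r+k\right)/\Gamma\left(r\right)$; the substitution $y\mapsto-y-1$ then converts the $r$-Bell statements into (\ref{5}) and (\ref{7}). You instead prove (\ref{5}) from scratch: the bivariate generating function of the left-hand side is $G_{r}\left(t+s\right)$, and your reduction of the right-hand side --- the $r$-Stirling generating function (\ref{34}) at argument $-s$, the binomial series, and the cancellation $e^{-rs}\left[\left(y+1\right)e^{-s}-ye^{t}\right]^{-r}=\left[1-y\left(e^{t+s}-1\right)\right]^{-r}$ --- checks out; it is legitimate as a formal power series identity because $\left(1-e^{-s}\right)^{k}$ has $s$-order $k$, so every coefficient of $t^{n}s^{m}$ receives only finitely many contributions. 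Obtaining (\ref{7}) by inverting the triangular system through the $r$-Stirling orthogonality relation is also sound, and it makes explicit something the paper leaves implicit: (\ref{7}) is literally the matrix inverse of (\ref{5}). What each approach buys: yours is self-contained, needing only (\ref{1}), (\ref{34}), and Broder's orthogonality, whereas the paper's reuses machinery it needs anyway, since Lemma \ref{lem1} also drives the Dobinski-type series evaluation, the ordinary generating function, and the corollary following this theorem.

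One discrepancy should be flagged, and it works in your favor: your inversion produces the constant $\left(r\right)_{p}\left(1+y\right)^{p}$, whereas the printed (\ref{7}) has $\left(p\right)_{r}\left(1+y\right)^{p}$, and these differ in general (for instance $\left(2\right)_{3}=24$ while $\left(3\right)_{2}=12$). Your constant is the correct one: at $r=1$ it gives $\left(1\right)_{p}=p!$, in agreement with (\ref{9}), and a direct check at $n=0$, $p=2$, $r=3$ (there the sum equals $12\left(1+y\right)^{2}$ while $w_{0}^{\left(5\right)}\left(y\right)=1$) confirms $\left(r\right)_{p}=12$. The paper's own proof, carried out carefully, also yields $\left(r\right)_{p}$; the $\left(p\right)_{r}$ in the displayed theorem is a typo, so your Pochhammer bookkeeping is right even though it does not literally reproduce the printed formula.
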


\begin{proof}
We prove (\ref{5}) first. Using the following property of $r$-Bell polynomials
presented in \cite[Eq. (8)]{Mihoubi}%
\[
\varphi_{n+m,r}\left(  y\right)  =\sum_{k=0}^{m}%
\genfrac{\{}{\}}{0pt}{}{m+r}{k+r}%
_{r}y^{k}\varphi_{n,r+k}\left(  y\right)
\]
in (\ref{4}), we have%
\begin{align*}
\left(  -1\right)  ^{n+m}w_{n+m}^{\left(  r\right)  }\left(  -y-1\right)   &
=\sum_{k=0}^{m}%
\genfrac{\{}{\}}{0pt}{}{m+r}{k+r}%
_{r}y^{k}\frac{\Gamma\left(  k+r\right)  }{\Gamma\left(  r\right)  }\frac
{1}{\Gamma\left(  k+r\right)  }%
{\displaystyle\int\limits_{0}^{\infty}}
\lambda^{r+k-1}\varphi_{n,r+k}\left(  y\lambda\right)  e^{-\lambda}d\lambda\\
&  =\sum_{k=0}^{m}%
\genfrac{\{}{\}}{0pt}{}{m+r}{k+r}%
_{r}\left(  r\right)  _{k}y^{k}\left(  -1\right)  ^{n}w_{n}^{\left(  r\right)
}\left(  -y-1\right)
\end{align*}
which is equal to (\ref{5}).

To prove (\ref{7}), we use the formula
\[
y^{p}\varphi_{n,r+p}\left(  y\right)  =\sum_{k=0}^{p}%
\genfrac{[}{]}{0pt}{}{p+r}{k+r}%
_{r}\left(  -1\right)  ^{p-k}\varphi_{n+k,p}\left(  y\right)
\]
(\cite[Eq. (11)]{Mihoubi}) in (\ref{4}).
\end{proof}

Using (\ref{15}) in (\ref{5}), we obtain the following result similar which is
slightly different from (\ref{23}).

\begin{corollary}%
\[
w_{n+m}^{\left(  r\right)  }\left(  y\right)  =\sum_{k=0}^{n}\sum_{j=0}^{m}%
\genfrac{\{}{\}}{0pt}{}{m+r}{j+r}%
_{r}\binom{n}{k}\left(  j+r\right)  ^{n-k}\left(  -1\right)  ^{n+m+j}\left(
r\right)  _{j}\left(  y+1\right)  ^{j}w_{k}^{\left(  r+j\right)  }\left(
-y-1\right)  .
\]

\end{corollary}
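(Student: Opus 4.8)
The plan is to feed the inversion formula (\ref{15}) into the recurrence (\ref{5}). The two ingredients mesh perfectly: (\ref{5}) expands $w_{n+m}^{\left(  r\right)  }\left(  y\right)$ as a combination of the polynomials $w_{n}^{\left(  r+k\right)  }\left(  y\right)$ evaluated at the argument $y$, whereas the corollary wants these recast in terms of $w_{k}^{\left(  r+j\right)  }\left(  -y-1\right)$ evaluated at $-y-1$, and (\ref{15}) is precisely the device that converts an argument $y$ into $-y-1$.

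First I would rename the summation index in (\ref{5}) from $k$ to $j$, so that it reads
\[
w_{n+m}^{\left(  r\right)  }\left(  y\right)  =\sum_{j=0}^{m}


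\genfrac{\{}{\}}{0pt}{}{m+r}{j+r}

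_{r}\left(  r\right)  _{j}\left(  -1\right)  ^{m+j}\left(  y+1\right)
^{j}w_{n}^{\left(  r+j\right)  }\left(  y\right)  .
\]
The task then reduces to writing each factor $w_{n}^{\left(  r+j\right)  }\left(  y\right)$ in terms of values at $-y-1$. Here I would exploit that the map $y\mapsto-y-1$ is an involution, since $-\left(  -y-1\right)  -1=y$. Replacing $y$ by $-y-1$ in (\ref{15}) and rearranging therefore gives
\[
w_{n}^{\left(  r\right)  }\left(  y\right)  =\left(  -1\right)  ^{n}\sum
_{k=0}^{n}\binom{n}{k}r^{n-k}w_{k}^{\left(  r\right)  }\left(  -y-1\right)  ,
\]
and a further replacement of $r$ by $r+j$ supplies the expansion of $w_{n}^{\left(  r+j\right)  }\left(  y\right)$ needed above.

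Finally I would substitute this expansion into the relabeled (\ref{5}), producing the nested double sum over $j$ and $k$. The only bookkeeping is to collect the sign contributions $\left(  -1\right)  ^{m+j}$ coming from (\ref{5}) together with the $\left(  -1\right)  ^{n}$ coming from (\ref{15}) into the single factor $\left(  -1\right)  ^{n+m+j}$, and to note the trivial identity $\left(  r+j\right)  ^{n-k}=\left(  j+r\right)  ^{n-k}$; this yields exactly the double sum in the statement. I expect no genuine obstacle, as the argument is a direct composition of two already established identities, so the only point demanding care is the tracking of the two sign flips and the correct application of the involution $y\mapsto-y-1$.
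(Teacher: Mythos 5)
Your proposal is correct and follows exactly the route the paper indicates: it substitutes the inverted form of (\ref{15}) (obtained via the involution $y\mapsto-y-1$, with $r$ replaced by $r+j$) into the relabeled recurrence (\ref{5}) and collects the signs $\left(-1\right)^{m+j}\left(-1\right)^{n}=\left(-1\right)^{n+m+j}$. The paper states this derivation only as ``Using (\ref{15}) in (\ref{5})'' without details, and your write-up supplies precisely those details correctly.
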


We note that it is also possible to derive this result by applying (\ref{8})
and (\ref{4}) in
\[
\varphi_{n+m,r}\left(  y\right)  =\sum_{k=0}^{n}\sum_{j=0}^{m}\binom{n}{k}%
\genfrac{\{}{\}}{0pt}{}{m+r}{j+r}%
_{r}\left(  j+r\right)  ^{n-k}y^{j}\varphi_{k}\left(  y\right)  ,
\]
a formula given in \cite[Eq. (9)]{Mihoubi}. Moreover, for $r=1$, (\ref{7}) can
be written as
\begin{equation}
\left(  1+y\right)  ^{p}w_{n}^{\left(  p+1\right)  }\left(  y\right)
=\frac{1}{p!}\sum_{k=0}^{p}%
\genfrac{[}{]}{0pt}{}{p+1}{k+1}%
w_{n+k}\left(  y\right)  , \label{9}%
\end{equation}
which is also polynomial extension of (\ref{29}). Replacing $y$ by $-y$ and
integrating both sides with respect to $y$ from $0$ to $1$, we have%
\[%
{\displaystyle\int\limits_{0}^{1}}
\left(  1-y\right)  ^{p}w_{n}^{\left(  p+1\right)  }\left(  -y\right)
dy=\frac{1}{p!}\sum_{k=0}^{p}%
\genfrac{[}{]}{0pt}{}{p+1}{k+1}%
B_{n+k}.
\]
Then using (\ref{17}), we obtain the following integral representation for
$p$-Bernoulli numbers.

\begin{theorem}
\label{teo2}For $n\geq1$ and $p\geq0$%
\begin{equation}%
{\displaystyle\int\limits_{0}^{1}}
\left(  1-y\right)  ^{p}w_{n}^{\left(  p+1\right)  }\left(  -y\right)
dy=\left(  -1\right)  ^{n-1}\frac{p+1}{p+2}B_{n-1,p+1}. \label{10}%
\end{equation}

\end{theorem}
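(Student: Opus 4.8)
The plan is to read the result off the displayed identity that immediately precedes the statement, namely
\[
\int_0^1 (1-y)^p w_n^{(p+1)}(-y)\,dy=\frac{1}{p!}\sum_{k=0}^p \genfrac{[}{]}{0pt}{}{p+1}{k+1}B_{n+k},
\]
and then to recognize the right-hand sum as a reindexed instance of (\ref{17}). This intermediate identity I would reconstruct exactly as indicated in the text: take the $r=1$ specialization (\ref{9}) of (\ref{7}), replace $y$ by $-y$ so that $(1+y)^p$ becomes $(1-y)^p$, integrate termwise over $[0,1]$, and apply the Bernoulli integral $\int_0^1 w_m(-y)\,dy=B_m$, valid for $m>0$. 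Since the sum is finite the interchange with the integral is harmless, and because $n\ge 1$, $k\ge 0$ we always have $m=n+k\ge 1$, so the Bernoulli integral applies to every term.

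The decisive step is to put $\sum_{k=0}^p \genfrac{[}{]}{0pt}{}{p+1}{k+1}B_{n+k}$ in closed form. I would invoke (\ref{17}) with the parameters shifted to $(n-1,p+1)$, which reads $\sum_{k=0}^{p+1}\genfrac{[}{]}{0pt}{}{p+1}{k}(-1)^k B_{n-1+k}=\frac{(p+1)!}{p+2}B_{n-1,p+1}$. The $k=0$ term drops because $\genfrac{[}{]}{0pt}{}{p+1}{0}=0$, and the substitution $k\mapsto k+1$ rewrites its left side as $-\sum_{k=0}^p \genfrac{[}{]}{0pt}{}{p+1}{k+1}(-1)^k B_{n+k}$. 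To match the unsigned sum produced by the integral I would insert the missing signs via the symmetry $B_m=(-1)^m B_m$, which contributes a global factor $(-1)^n$:
\[
\sum_{k=0}^p \genfrac{[}{]}{0pt}{}{p+1}{k+1}B_{n+k}=(-1)^n\sum_{k=0}^p \genfrac{[}{]}{0pt}{}{p+1}{k+1}(-1)^k B_{n+k}=(-1)^{n-1}\frac{(p+1)!}{p+2}B_{n-1,p+1}.
\]
Dividing by $p!$ turns $(p+1)!$ into $p+1$ and yields precisely (\ref{10}).

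The hard part will be the legitimacy of the symmetry step $B_m=(-1)^m B_m$, which holds for every index except $m=1$, where the convention $B_1=-\frac{1}{2}$ fixed by the Bernoulli integral gives $B_1\ne(-1)B_1$. The only way the sum can reach $m=1$ is through $n=1$, $k=0$, so for $n\ge 2$ the symmetry is unconditional and the argument above goes through verbatim. The boundary case $n=1$ is exactly where I would check matters by hand: there the $k=0$ term carries $B_1$, so I would either verify (\ref{10}) directly against the value $B_{0,p+1}$ or isolate the $B_1$ contribution and track its sign explicitly, taking care that the convention $B_1=-\frac{1}{2}$ is used consistently in both the integral and the reduction via (\ref{17}).
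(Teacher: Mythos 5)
Your proposal follows exactly the route of the paper's own proof: the paper derives the same intermediate identity from (\ref{9}) (replace $y$ by $-y$, integrate over $[0,1]$, apply $\int_0^1 w_m(-y)\,dy=B_m$ for $m>0$) and then disposes of the resulting sum with the single phrase ``using (\ref{17}).'' The reindexing of (\ref{17}) to the parameters $(n-1,p+1)$, the vanishing of the $k=0$ term via $\genfrac{[}{]}{0pt}{}{p+1}{0}=0$, and the conversion of the unsigned sum into the signed one through $B_m=(-1)^mB_m$ are precisely the steps that phrase suppresses, and your bookkeeping of them is correct. So for $n\ge 2$ your argument and the paper's coincide.

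The boundary case you flag, however, is not a formality that a hand check will dispatch in the theorem's favor: the identity (\ref{10}) is actually \emph{false} at $n=1$. Take $n=1$, $p=0$: the left side is $\int_0^1 w_1(-y)\,dy=\int_0^1(-y)\,dy=-\tfrac12$, while the right side is $(-1)^0\tfrac12 B_{0,1}=\tfrac12$, since $B_{0,p}=1$ from the hypergeometric generating function. (Similarly, for $n=1$, $p=1$ one finds $-\tfrac13$ on the left against $\tfrac23$ on the right.) The intermediate identity itself is sound at $n=1$ --- e.g.\ for $p=1$ it gives $\genfrac{[}{]}{0pt}{}{2}{1}B_1+\genfrac{[}{]}{0pt}{}{2}{2}B_2=-\tfrac13$, matching the integral --- so the failure sits exactly where you located it: the $k=0$ term carries $B_1$, and $B_1\neq-B_1$, so the sign flip linking the unsigned sum to (\ref{17}) breaks. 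The correct conclusion is that the theorem's hypothesis must be $n\ge 2$ (or the $n=1$ case restated with the $B_1$ term corrected); the paper's proof silently commits the same sign-flip for all $n\ge1$ and its statement inherits the error, whereas your proposal, carried to completion, detects it.
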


\noindent The explicit formula (\ref{12}) for $p$-Bernoulli numbers can be
also deduced using this integral representation in (\ref{3}).

The following theorem generalizes the identities (\ref{12}) and (\ref{17}).

\begin{theorem}
For $n,p,m\geq0,$ we have
\begin{equation}
B_{n+m,p}=\left(  p+1\right)  \sum_{k=0}^{m}%
\genfrac{\{}{\}}{0pt}{}{m+p}{k+p}%
_{p}\frac{\left(  -1\right)  ^{k}\left(  p+1\right)  _{k}}{k+p+1}B_{n,p+k}.
\label{18}%
\end{equation}
For $n,r\geq1$ and $p\geq0$ we have%
\begin{equation}
B_{n,p+r}=\frac{r\left(  p+r+1\right)  }{\left(  r+1\right)  \left(  p\right)
_{r+1}}\sum_{k=0}^{p}%
\genfrac{[}{]}{0pt}{}{p+r}{k+r}%
_{r}\left(  -1\right)  ^{k}B_{n+k,r}.\text{ } \label{19}%
\end{equation}

\end{theorem}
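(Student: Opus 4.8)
The plan is to prove both identities by the same device that produced Theorem~\ref{teo2}: evaluate a suitable integral $\int_{0}^{1}(1-y)^{a}w_{N}^{(a+1)}(-y)\,dy$ in two different ways. In one direction the integral is read off directly from (\ref{10}); in the other the integrand is first expanded by one of the recurrences (\ref{5}) or (\ref{7}), integrated term by term, and each resulting integral is again collapsed to a $p$-Bernoulli number via (\ref{10}). Equating the two evaluations and tidying up the Pochhammer symbols and signs will give (\ref{18}) and (\ref{19}) respectively.

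For (\ref{18}) (take $p\geq1$) I would consider $I=\int_{0}^{1}(1-y)^{p-1}w_{n+m+1}^{(p)}(-y)\,dy$. Applying (\ref{10}) directly, with its ``$p$'' set to $p-1$ and its ``$n$'' set to $n+m+1$, gives $I=(-1)^{n+m}\frac{p}{p+1}B_{n+m,p}$. Alternatively, writing $n+m+1=(n+1)+m$ and expanding $w_{n+m+1}^{(p)}$ by (\ref{5}) (then sending $y\mapsto -y$, so each $(y+1)^{k}$ becomes $(1-y)^{k}$), I would integrate term by term; each inner integral $\int_{0}^{1}(1-y)^{p+k-1}w_{n+1}^{(p+k)}(-y)\,dy$ equals $(-1)^{n}\frac{p+k}{p+k+1}B_{n,p+k}$ by (\ref{10}). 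Equating the two forms, the signs collapse since $(-1)^{n+m}(-1)^{m+k}(-1)^{n}=(-1)^{k}$, and the coefficient simplifies through $\frac{1}{p}(p)_{k}(p+k)=(p+1)_{k}$, which reproduces (\ref{18}).

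For (\ref{19}) (here $r\geq1$, so $p+r\geq1$) I would run the same argument on $J=\int_{0}^{1}(1-y)^{p+r-1}w_{n+1}^{(p+r)}(-y)\,dy$. Directly, (\ref{10}) gives $J=(-1)^{n}\frac{p+r}{p+r+1}B_{n,p+r}$. Expanding $w_{n+1}^{(p+r)}$ by (\ref{7}) instead, the denominator $(1+y)^{p}$ turns into $(1-y)^{p}$ under $y\mapsto -y$ and cancels part of $(1-y)^{p+r-1}$, leaving $(1-y)^{r-1}$ in each summand; applying (\ref{10}) to $\int_{0}^{1}(1-y)^{r-1}w_{n+1+k}^{(r)}(-y)\,dy=(-1)^{n+k}\frac{r}{r+1}B_{n+k,r}$ and equating, the constant $\frac{p+r+1}{p+r}\cdot\frac{1}{(p)_{r}}\cdot\frac{r}{r+1}$ collapses to $\frac{r(p+r+1)}{(r+1)(p)_{r+1}}$ via $(p)_{r}(p+r)=(p)_{r+1}$, giving (\ref{19}).

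The calculations are routine once the integrals are set up; the steps that need attention are the index matching when (\ref{10}) is reused --- one must keep the power of $(1-y)$ exactly one below the order of the geometric polynomial --- and the sign and Pochhammer bookkeeping that yields the stated coefficients. The only genuine gap is the endpoint $p=0$ of (\ref{18}), where the integral $I$ degenerates (the exponent $p-1$ becomes negative and $w^{(0)}$ appears); I would treat this case separately, checking it directly from the explicit formula (\ref{12}) (equivalently the relation (\ref{17})) with $B_{0,p+k}$ and $B_{n,k}$ specialized accordingly.
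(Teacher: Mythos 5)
Your proposal is correct and follows essentially the same route as the paper's own proof: both substitute $y\mapsto-y$ in (\ref{5}) and (\ref{7}), multiply by the appropriate power of $(1-y)$, integrate over $[0,1]$, and collapse every integral via (\ref{10}), your shift of $n$ at the outset rather than at the end being purely cosmetic. Your separate treatment of the degenerate endpoint $p=0$ of (\ref{18}) is a point of care that the paper's own proof silently skips.
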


\begin{proof}
Firstly, we replace $y$ with $-y$ in (\ref{7}), multiply both sides by
$\left(  1-y\right)  ^{r-1}$, and integrate with respect to $y$ from $0$ to
$1$. The result is%
\[%
{\displaystyle\int\limits_{0}^{1}}
\left(  1-y\right)  ^{p+r-1}w_{n}^{\left(  r+p\right)  }\left(  -y\right)
dy=\frac{1}{\left(  p\right)  _{r}}\sum_{k=0}^{p}%
\genfrac{[}{]}{0pt}{}{p+r}{k+r}%
_{r}%
{\displaystyle\int\limits_{0}^{1}}
\left(  1-y\right)  ^{r-1}w_{n+k}^{\left(  r\right)  }\left(  -y\right)  dy.
\]
From (\ref{10}) this equation turns into%
\[
B_{n-1,p+r}=\frac{r\left(  p+r+1\right)  }{\left(  r+1\right)  \left(
p\right)  _{r+1}}\sum_{k=0}^{p}%
\genfrac{[}{]}{0pt}{}{p+r}{k+r}%
_{r}\left(  -1\right)  ^{k}B_{n+k-1,r}.
\]
Replacing $n$ with $n+1$ in the above equation completes the proof (\ref{19}).

Applying the same method to the identity (\ref{5}) gives (\ref{18}).
\end{proof}

\section{Congruences}

\setcounter{theorem}{0} \setcounter{equation}{0}

In this section we first consider congruences modulo a prime number $q$ for
higher order geometric polynomials. We start with two auxiliary results.

\begin{lemma}
\label{lem2} Let $q$ be an odd prime and $y$ be an integer. Then, we have%
\[
w_{q}\left(  y\right)  \equiv y\left(  \operatorname{mod}q\right)  .
\]

\end{lemma}

\begin{proof}
From (\ref{30}), we have%
\[
w_{q}\left(  y\right)  =%
\genfrac{\{}{\}}{0pt}{}{q}{0}%
+%
\genfrac{\{}{\}}{0pt}{}{q}{1}%
y+%
\genfrac{\{}{\}}{0pt}{}{q}{q}%
q!y^{q}+\sum_{k=2}^{q-1}%
\genfrac{\{}{\}}{0pt}{}{q}{k}%
k!y^{k}.
\]
Since%
\[%
\genfrac{\{}{\}}{0pt}{}{q}{0}%
=0,\text{ }%
\genfrac{\{}{\}}{0pt}{}{q}{1}%
=%
\genfrac{\{}{\}}{0pt}{}{q}{q}%
=1\text{ }%
\]
and by (\ref{33})
\[
k!%
\genfrac{\{}{\}}{0pt}{}{q}{k}%
\equiv0\text{ }\left(  \operatorname{mod}q\right)  ,\text{ }k=2,3,\ldots q-1,
\]
we have%
\[
w_{q}\left(  y\right)  \equiv y+q!y^{q}\equiv y\text{ }\left(
\operatorname{mod}q\right)  ,
\]
the desired result.
\end{proof}

\begin{lemma}
\label{lem3} Let $q$ be a prime and $y$ be an integer. Then for all $n\geq1$,
we have
\[
w_{q+n-1}\left(  y\right)  \equiv w_{n}\left(  y\right)  \left(
\operatorname{mod}q\right)  .
\]

\end{lemma}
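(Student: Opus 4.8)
The plan is to prove the congruence $w_{q+n-1}(y) \equiv w_n(y) \pmod{q}$ by induction on $n$, using Lemma~\ref{lem2} as the base case and the recurrence for geometric polynomials as the inductive engine.

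Let me think about what tools are available and how to structure this.The plan is to reduce the statement to the explicit formula for $w_{N}(y)$ and then apply Fermat's little theorem to the coefficients one at a time, rather than attempting an induction. Combining (\ref{30}) with (\ref{36}), I would write, for every $N\geq 1$,
\[
w_{N}(y)=\sum_{k=0}^{N}k!\genfrac{\{}{\}}{0pt}{}{N}{k}y^{k}=\sum_{k\geq 0}c_{k}(N)\,y^{k},\qquad c_{k}(N):=\sum_{j=1}^{k}(-1)^{k-j}\binom{k}{j}j^{N},
\]
where $c_{k}(N)=k!\genfrac{\{}{\}}{0pt}{}{N}{k}$ vanishes for $k>N$, so the sum over $k$ is genuinely finite. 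The reason for passing to $c_{k}(N)$ is that this coefficient is given by the \emph{same} formula for every $k$ and every $N\geq 1$, regardless of whether $k\leq N$.

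The arithmetic input is the universal form of Fermat's little theorem, namely $j^{q}\equiv j\ (\operatorname{mod}q)$ for every integer $j$. Since $n\geq 1$, this immediately gives
\[
j^{q+n-1}=j^{\,n-1}\,j^{q}\equiv j^{\,n-1}\,j=j^{\,n}\ (\operatorname{mod}q)
\]
for every integer $j$, with no need to separate the cases $q\mid j$ and $q\nmid j$. Feeding this into the definition of $c_{k}$ term by term, I obtain $c_{k}(q+n-1)\equiv c_{k}(n)\ (\operatorname{mod}q)$ for every $k\geq 0$.

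It then remains to compare the two polynomials coefficient by coefficient. For $0\leq k\leq n$ the coefficients already agree modulo $q$. For $n<k\leq q+n-1$ the right-hand side is $c_{k}(n)=0$ because $k>n$, so the congruence forces $c_{k}(q+n-1)\equiv 0\ (\operatorname{mod}q)$; that is, every ``extra'' coefficient of $w_{q+n-1}$ not present in $w_{n}$ is divisible by $q$. Hence
\[
w_{q+n-1}(y)=\sum_{k=0}^{q+n-1}c_{k}(q+n-1)y^{k}\equiv\sum_{k=0}^{n}c_{k}(n)y^{k}=w_{n}(y)\ (\operatorname{mod}q).
\]

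I expect the only genuine subtlety to be this degree mismatch: $w_{q+n-1}$ carries terms up to $y^{q+n-1}$ while $w_{n}$ stops at $y^{n}$, so one cannot match the two expansions directly and must instead verify that the surplus terms collapse modulo $q$. Recognizing that $c_{k}(N)$ extends to all $k$ with value $0$ for $k>N$ is precisely what makes this step automatic. As a consistency check, the case $n=1$ recovers $w_{q}(y)\equiv w_{1}(y)=y$, i.e. Lemma~\ref{lem2}, so the same computation could alternatively be organized as an induction on $n$ anchored at that lemma.
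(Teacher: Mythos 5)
Your proof is correct, and it takes a genuinely cleaner route than the paper's. The paper argues by a three-fold case analysis ($q=2$; $q$ odd with $n\geq q-1$; $q$ odd with $1\leq n<q-1$): it first discards the terms with $k\geq q$ using $q\mid k!$, then compares the surviving coefficients via (\ref{36}) and the restricted Fermat congruence $j^{q-1}\equiv 1\pmod{q}$, which is legitimate only because the remaining indices satisfy $j\leq k\leq q-1$ and hence $(j,q)=1$; arranging for that coprimality is exactly what forces the truncations and the case split. You instead invoke the universal form $j^{q}\equiv j\pmod{q}$, valid for every integer $j$, so nothing needs to be discarded and no coprimality checked: writing $c_{k}(N)=\sum_{j=1}^{k}(-1)^{k-j}\binom{k}{j}j^{N}$, you obtain $c_{k}(q+n-1)\equiv c_{k}(n)\pmod{q}$ for every $k$ in one stroke, and the degree mismatch between the two polynomials is absorbed by the vanishing of $c_{k}(n)$ for $k>n$. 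Beyond brevity, your argument yields slightly more than the lemma: the coefficientwise congruence $k!\genfrac{\{}{\}}{0pt}{}{q+n-1}{k}\equiv k!\genfrac{\{}{\}}{0pt}{}{n}{k}\pmod{q}$ for all $k$, and, at $n=1$, Lemma \ref{lem2} itself (for $q=2$ as well), which the paper proves as a separate lemma. The only statement you assert without justification is that $c_{k}(N)=k!\genfrac{\{}{\}}{0pt}{}{N}{k}$ continues to hold --- that is, $c_{k}(N)=0$ --- when $k>N\geq 1$; this is standard but deserves one line: $\sum_{j=0}^{k}(-1)^{k-j}\binom{k}{j}j^{N}$ is the $k$-th finite difference of $x^{N}$ evaluated at $x=0$, hence vanishes for $k>N$, and the $j=0$ term is itself zero because $N\geq 1$ (equivalently, the sum counts surjections from an $N$-set onto a $k$-set, of which there are none). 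Note that the paper implicitly relies on the same extension of (\ref{36}) in its third case, so this is a debt shared by both proofs rather than a gap peculiar to yours.
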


\begin{proof}
If $q=2$, then by (\ref{30})%
\begin{align*}
w_{n+1}\left(  y\right)  -w_{n}\left(  y\right)   &  =\sum_{k=0}^{n+1}%
\genfrac{\{}{\}}{0pt}{}{n+1}{k}%
k!y^{k}-\sum_{k=0}^{n}%
\genfrac{\{}{\}}{0pt}{}{n}{k}%
k!y^{k}\\
&  =\left(  n+1\right)  !y^{n+1}+\sum_{k=2}^{n}\left(
\genfrac{\{}{\}}{0pt}{}{n+1}{k}%
-%
\genfrac{\{}{\}}{0pt}{}{n}{k}%
\right)  k!y^{k}\\
&  \equiv0\text{ }\left(  \operatorname{mod}2\right)
\end{align*}
since $%
\genfrac{\{}{\}}{0pt}{}{n}{0}%
=0$ and $%
\genfrac{\{}{\}}{0pt}{}{n}{1}%
=1$ for $n>0$.

Now, suppose that $q$ is an odd prime and let $n\geq q-1$. Then again by
(\ref{30}) we write%
\begin{align*}
w_{q+n-1}\left(  y\right)  -w_{n}\left(  y\right)   &  =\sum_{k=0}^{q+n-1}%
\genfrac{\{}{\}}{0pt}{}{q+n-1}{k}%
k!y^{k}-\sum_{k=0}^{n}%
\genfrac{\{}{\}}{0pt}{}{n}{k}%
k!y^{k}\\
&  =\sum_{k=0}^{q-1}\left(
\genfrac{\{}{\}}{0pt}{}{q+n-1}{k}%
-%
\genfrac{\{}{\}}{0pt}{}{n}{k}%
\right)  k!y^{k}\\
&  +\sum_{k=q}^{q+n-1}%
\genfrac{\{}{\}}{0pt}{}{q+n-1}{k}%
k!y^{k}-\sum_{k=q}^{n}%
\genfrac{\{}{\}}{0pt}{}{n}{k}%
k!y^{k}\\
&  \equiv\sum_{k=0}^{q-1}\left(
\genfrac{\{}{\}}{0pt}{}{q+n-1}{k}%
-%
\genfrac{\{}{\}}{0pt}{}{n}{k}%
\right)  k!y^{k}\text{ }\left(  \operatorname{mod}q\right)  .
\end{align*}
Using (\ref{36}) we obtain that%
\[
w_{q+n-1}\left(  y\right)  -w_{n}\left(  y\right)  \equiv\sum_{k=0}%
^{q-1}k!y^{k}\frac{1}{k!}\sum_{j=1}^{k}\left(  -1\right)  ^{k-j}\binom{k}%
{j}j^{n}\left(  j^{q-1}-1\right)  \equiv0\text{ }\left(  \operatorname{mod}%
q\right)
\]
since $\left(  j,q\right)  =1$ and $j^{q-1}-1\equiv0$ $\left(
\operatorname{mod}q\right)  $.

If $1\leq n<q-1$, then we write%
\begin{align*}
w_{q+n-1}\left(  y\right)  -w_{n}\left(  y\right)   &  =\sum_{k=0}^{q+n-1}%
\genfrac{\{}{\}}{0pt}{}{q+n-1}{k}%
k!y^{k}-\sum_{k=0}^{n}%
\genfrac{\{}{\}}{0pt}{}{n}{k}%
k!y^{k}\\
&  =\sum_{k=0}^{q-1}\left(
\genfrac{\{}{\}}{0pt}{}{q+n-1}{k}%
-%
\genfrac{\{}{\}}{0pt}{}{n}{k}%
\right)  k!y^{k}\\
&  +\sum_{k=q}^{q+n-1}%
\genfrac{\{}{\}}{0pt}{}{q+n-1}{k}%
k!y^{k}\\
&  -\sum_{k=n}^{q-1}%
\genfrac{\{}{\}}{0pt}{}{n}{k}%
k!y^{k}+\sum_{k=n+1}^{q-1}%
\genfrac{\{}{\}}{0pt}{}{n}{k}%
k!y^{k}\\
&  \equiv\sum_{k=0}^{q-1}\sum_{j=1}^{k}\left(  -1\right)  ^{k-j}\binom{k}%
{j}y^{k}j^{n}\left(  j^{q-1}-1\right)  \equiv0\text{ }\left(
\operatorname{mod}q\right)
\end{align*}
since $%
\genfrac{\{}{\}}{0pt}{}{n}{k}%
=0$ when $k>n$.

Therefore, for $n\geq1$, $w_{q+n-1}\left(  y\right)  \equiv w_{n}\left(
y\right)  $ $\left(  \operatorname{mod}q\right)  $.
\end{proof}

We note that a more general result can be found in \cite{AJ2018} for Fubini numbers.

\begin{theorem}
\label{teo4}Let $q$ be an odd prime. If $1+y$ is not a multiple of $q$, then
$w_{q}^{\left(  q\right)  }\left(  y\right)  \equiv0$ $\left(
\operatorname{mod}q\right)  $.
\end{theorem}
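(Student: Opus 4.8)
The plan is to deduce the congruence from the polynomial form (\ref{9}) of the identity (\ref{29}) together with the two auxiliary lemmas. Specializing (\ref{9}) to $p=q-1$ and $n=q$ and clearing the factorial yields the integer identity
\[
(q-1)!\,(1+y)^{q-1}\,w_{q}^{(q)}(y)=\sum_{k=0}^{q-1}\genfrac{[}{]}{0pt}{}{q}{k+1}w_{q+k}(y),
\]
valid for every integer $y$; note that both sides are integers. I would analyse the right-hand side modulo $q$.

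First I would invoke the congruence (\ref{32}), which annihilates every Stirling number $\genfrac{[}{]}{0pt}{}{q}{k+1}$ with $2\le k+1\le q-1$. Only the boundary terms $k=0$ and $k=q-1$ survive, with $\genfrac{[}{]}{0pt}{}{q}{1}=(q-1)!$ and $\genfrac{[}{]}{0pt}{}{q}{q}=1$, so the sum reduces modulo $q$ to $(q-1)!\,w_{q}(y)+w_{2q-1}(y)$.

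Next I would evaluate these two survivors. Lemma \ref{lem2} gives $w_{q}(y)\equiv y$, and Wilson's theorem gives $(q-1)!\equiv-1$, so the first term is $\equiv-y$. Writing $2q-1=q+q-1$ and applying Lemma \ref{lem3} with $n=q$ gives $w_{2q-1}(y)\equiv w_{q}(y)\equiv y$. Hence the entire right-hand side vanishes modulo $q$, and therefore so does $(q-1)!\,(1+y)^{q-1}\,w_{q}^{(q)}(y)$.

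Finally I would cancel the prefactor. Since $q$ is prime, $(q-1)!$ is coprime to $q$, and the hypothesis $q\nmid(1+y)$ makes $(1+y)^{q-1}$ coprime to $q$ (indeed $\equiv1$ by Fermat's little theorem). Thus the integer $(q-1)!\,(1+y)^{q-1}$ is a unit modulo $q$ and may be removed, leaving $w_{q}^{(q)}(y)\equiv0\pmod q$. There is no serious computational obstacle; the only delicate point is this last cancellation, which is exactly where the hypothesis $q\nmid(1+y)$ enters—if $q\mid(1+y)$ the factor $(1+y)^{q-1}$ is itself $\equiv0$ and the identity conveys no information about $w_{q}^{(q)}(y)$.
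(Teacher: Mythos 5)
Your proof is correct and is essentially the paper's own argument: the same specialization $p=q-1$, $n=q$ of (\ref{9}), the same use of (\ref{32}) to kill the middle Stirling terms, Lemmas \ref{lem2} and \ref{lem3} for the two boundary terms, and Wilson's plus Fermat's theorems to cancel the unit factor $(q-1)!\,(1+y)^{q-1}$. If anything, your write-up is slightly cleaner, since the paper's display writes the $k=q$ term as $\genfrac{[}{]}{0pt}{}{q}{q}w_{1}(y)$ where only a congruence (via Lemma \ref{lem3}) rather than an equality holds, a point you handle explicitly.
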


\begin{proof}
We set $p=q-1$ and $n=q$ in (\ref{9}) to obtain%
\begin{align*}
&  \left(  1+y\right)  ^{q-1}\left(  q-1\right)  !w_{q}^{\left(  q\right)
}\left(  y\right) \\
&  \quad=\sum_{k=1}^{q}%
\genfrac{[}{]}{0pt}{}{q}{k}%
w_{q+k-1}\left(  y\right) \\
&  \quad=%
\genfrac{[}{]}{0pt}{}{q}{1}%
w_{q}\left(  y\right)  +%
\genfrac{[}{]}{0pt}{}{q}{q}%
w_{1}\left(  y\right)  +\sum_{k=2}^{q-1}%
\genfrac{[}{]}{0pt}{}{q}{k}%
w_{q+k-1}\left(  y\right) \\
&  \quad=\left(  q-1\right)  !w_{q}\left(  y\right)  +y+\sum_{k=2}^{q-1}%
\genfrac{[}{]}{0pt}{}{q}{k}%
w_{q+k-1}\left(  y\right)  .
\end{align*}
By Lemma \ref{lem2} and Lemma \ref{lem3}, we find that%
\begin{align*}
\left(  1+y\right)  ^{q-1}\left(  q-1\right)  !w_{q}^{\left(  q\right)
}\left(  y\right)   &  \equiv\left(  -1\right)  y+y+\sum_{k=2}^{q-1}%
\genfrac{[}{]}{0pt}{}{q}{k}%
w_{k}\left(  y\right)  \text{ }\left(  \operatorname{mod}q\right) \\
&  \equiv0\text{ }\left(  \operatorname{mod}q\right)  ,
\end{align*}
since by (\ref{32})$%
\genfrac{[}{]}{0pt}{}{q}{k}%
\equiv0$ $\left(  \operatorname{mod}q\right)  $ for $2\leq k\leq q-1$ and
$w_{k}\left(  y\right)  $ is an integer when $y$ is an integer. The result now
follows from Fermat's and Wilson's theorems.
\end{proof}

It is obvious from (\ref{20}) that if $y$ is an integer which is a multiple of
$q$, then $w_{n}^{\left(  r\right)  }\left(  y\right)  \equiv0$ $\left(
\operatorname{mod}q\right)  $, since $%
\genfrac{\{}{\}}{0pt}{}{n}{k}%
\left(  r\right)  _{k}$ is an integer. We note that Theorem \ref{teo4} is a
special case which can be drawn from the following result.

\begin{theorem}
If $y$ is an integer that is not a multiple of $q$, then $w_{n}^{\left(
r\right)  }\left(  y\right)  \equiv0$ $\left(  \operatorname{mod}q\right)  $
for $n\geq1$ and $r\equiv0$ $\left(  \operatorname{mod}q\right)  $.
\end{theorem}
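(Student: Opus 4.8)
The plan is to read the result off directly from the defining sum (\ref{20}). Because $y$ is an integer and both the Stirling numbers $\genfrac{\{}{\}}{0pt}{}{n}{k}$ and the powers $y^{k}$ are integers, $w_{n}^{\left(r\right)}\left(y\right)=\sum_{k=0}^{n}\genfrac{\{}{\}}{0pt}{}{n}{k}\left(r\right)_{k}y^{k}$ is an integer whose residue modulo $q$ is the sum of the residues of its individual summands. I would therefore aim to show that every summand is divisible by $q$, and reduce the sum term by term.

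First I would isolate the $k=0$ term: for $n\geq1$ we have $\genfrac{\{}{\}}{0pt}{}{n}{0}=0$ (as recorded in Section 2), so this term contributes nothing. The remaining terms all have $k\geq1$, and here the key observation is that the Pochhammer symbol $\left(r\right)_{k}=r\left(r+1\right)\cdots\left(r+k-1\right)$ carries an explicit leading factor $r$. Since $r\equiv0$ $\left(\operatorname{mod}q\right)$ by hypothesis, this forces $\left(r\right)_{k}\equiv0$ $\left(\operatorname{mod}q\right)$ for every $k\geq1$; multiplying by the integers $\genfrac{\{}{\}}{0pt}{}{n}{k}$ and $y^{k}$ keeps each such summand a multiple of $q$. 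Summing over $k$ then gives $w_{n}^{\left(r\right)}\left(y\right)\equiv0$ $\left(\operatorname{mod}q\right)$.

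I do not expect a genuine obstacle here: the entire argument rests on the single divisibility remark $q\mid r\Rightarrow q\mid\left(r\right)_{k}$ for $k\geq1$, which runs parallel to the preceding observation, where for $y$ a multiple of $q$ it was the powers $y^{k}$ rather than $\left(r\right)_{k}$ that supplied the divisibility. In fact the hypothesis that $y$ is not a multiple of $q$ is never used—the divisibility is driven entirely by $r\equiv0$ $\left(\operatorname{mod}q\right)$—so together with that earlier remark one obtains $w_{n}^{\left(r\right)}\left(y\right)\equiv0$ $\left(\operatorname{mod}q\right)$ for every integer $y$ whenever $r\equiv0$ $\left(\operatorname{mod}q\right)$ and $n\geq1$. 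Specializing to $r=q$ and $n=q$ then recovers Theorem \ref{teo4}, which is why it was flagged as a special case.
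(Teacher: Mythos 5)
Your proof is correct and is essentially the paper's own argument: the paper sets $r=tq$, writes $\left(r\right)_{k}=k!\binom{tq+k-1}{k}=\left(tq+k-1\right)\cdots\left(tq+1\right)\left(tq\right)$, and reads off the explicit factor $tq$ — exactly your divisibility remark $q\mid r\Rightarrow q\mid\left(r\right)_{k}$ for $k\geq1$, with the $k=0$ term killed by $\genfrac{\{}{\}}{0pt}{}{n}{0}=0$. Your side observations are also accurate: you treat the $k=0$ term more explicitly than the paper does, and indeed the hypothesis that $q\nmid y$ is never used, so the congruence holds for all integers $y$ and specializing to $n=r=q$ recovers Theorem \ref{teo4}.
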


\begin{proof}
Let $r=tq$ for some integer $t$. By (\ref{20}), we have%
\[
w_{n}^{\left(  r\right)  }\left(  y\right)  =\sum_{k=0}^{n}k!\binom{tq+k-1}{k}%
\genfrac{\{}{\}}{0pt}{}{n}{k}%
y^{k}.
\]
Since%
\[
k!\binom{tq+k-1}{k}=\left(  tq+k-1\right)  \left(  tq+k-2\right)
\cdots\left(  tq+1\right)  \left(  tq\right)  \equiv0\text{ }\left(
\operatorname{mod}q\right)  ,
\]
we have the result.
\end{proof}

\begin{theorem}
If $y$ is an integer such that $y$ and $1+y$ are not multiples of an odd prime
$q$, then $w_{q-1}^{\left(  r\right)  }\left(  y\right)  \equiv0$ $\left(
\operatorname{mod}q\right)  $ for $r\equiv1$ $\left(  \operatorname{mod}%
q\right)  $.
\end{theorem}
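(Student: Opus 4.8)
The plan is to reduce the statement to the ordinary, first-order geometric polynomial $w_{q-1}(y)$ and then show directly that this quantity vanishes modulo $q$ under the stated hypotheses. First I would start from the explicit definition (\ref{20}),
\[
w_{q-1}^{\left(r\right)}\left(y\right)=\sum_{k=0}^{q-1}\genfrac{\{}{\}}{0pt}{}{q-1}{k}\left(r\right)_{k}y^{k},
\]
and exploit that $r\equiv1\ (\operatorname{mod}q)$. Since $\left(r\right)_{k}=r\left(r+1\right)\cdots\left(r+k-1\right)$, for $0\le k\le q-1$ each factor satisfies $r+j\equiv1+j\ (\operatorname{mod}q)$ with $1\le1+j\le k\le q-1$, so none of them is divisible by $q$ and the product reduces to $\left(r\right)_{k}\equiv k!\ (\operatorname{mod}q)$. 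Here it is essential that the summation index never exceeds $q-1$, so that the factor $r+\left(q-1\right)\equiv0$ never appears; this is exactly what forces the degree $n=q-1$ in the statement. Substituting gives $w_{q-1}^{\left(r\right)}\left(y\right)\equiv\sum_{k=0}^{q-1}\genfrac{\{}{\}}{0pt}{}{q-1}{k}k!\,y^{k}=w_{q-1}\left(y\right)\ (\operatorname{mod}q)$, so it remains to prove $w_{q-1}\left(y\right)\equiv0\ (\operatorname{mod}q)$.

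For the second step I would compute $k!\genfrac{\{}{\}}{0pt}{}{q-1}{k}$ modulo $q$ using the identity (\ref{36}). For $1\le k\le q-1$ every $j$ with $1\le j\le k$ is coprime to $q$, so Fermat's little theorem gives $j^{q-1}\equiv1$, whence $k!\genfrac{\{}{\}}{0pt}{}{q-1}{k}\equiv\sum_{j=1}^{k}\left(-1\right)^{k-j}\binom{k}{j}=\left(-1\right)^{k+1}\ (\operatorname{mod}q)$, the last equality following from $\sum_{j=0}^{k}\left(-1\right)^{k-j}\binom{k}{j}=0$. Since $\genfrac{\{}{\}}{0pt}{}{q-1}{0}=0$, this yields $w_{q-1}\left(y\right)\equiv\sum_{k=1}^{q-1}\left(-1\right)^{k+1}y^{k}\ (\operatorname{mod}q)$.

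I would then evaluate this alternating geometric progression. Multiplying by $1+y=1-\left(-y\right)$ collapses the geometric sum to $\left(1+y\right)w_{q-1}\left(y\right)\equiv y\bigl(1-\left(-y\right)^{q-1}\bigr)=y\left(1-y^{q-1}\right)\ (\operatorname{mod}q)$, where $\left(-y\right)^{q-1}=y^{q-1}$ because $q$ is odd. Now the two hypotheses enter precisely: since $q\nmid y$, Fermat's little theorem gives $y^{q-1}\equiv1$, so the right-hand side is $\equiv0$; and since $q\nmid1+y$, the factor $1+y$ is invertible modulo $q$ and may be cancelled, giving $w_{q-1}\left(y\right)\equiv0\ (\operatorname{mod}q)$. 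Combined with the first step, this is the assertion $w_{q-1}^{\left(r\right)}\left(y\right)\equiv0\ (\operatorname{mod}q)$.

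I expect the main obstacle to be the bookkeeping in the second step rather than any genuine difficulty: one must verify the alternating-sum evaluation $\sum_{j=1}^{k}\left(-1\right)^{k-j}\binom{k}{j}=\left(-1\right)^{k+1}$ and, more importantly, confirm that the geometric-series manipulation is an honest polynomial identity over $\mathbb{Z}$ (so that reducing modulo $q$ and cancelling $1+y$ are both legitimate) before invoking Fermat's theorem. An alternative route for the second step would be to deduce $w_{q-1}\left(y\right)\equiv0$ from Lemmas \ref{lem2} and \ref{lem3} together with a recurrence lowering the index from $q$ to $q-1$, but the direct computation via (\ref{36}) is cleaner and avoids introducing an extra recurrence.
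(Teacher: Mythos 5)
Your proposal is correct and takes essentially the same route as the paper's own proof: you reduce $\left(r\right)_{k}\equiv k!\ \left(\operatorname{mod}q\right)$ for $0\leq k\leq q-1$ (the paper phrases this as $\binom{tq+k}{k}\equiv1$ with $r=1+tq$), then apply (\ref{36}) together with Fermat's little theorem to get $k!\genfrac{\{}{\}}{0pt}{}{q-1}{k}\equiv\left(-1\right)^{k-1}\ \left(\operatorname{mod}q\right)$, and finally sum the alternating geometric series, multiply by $1+y$, and cancel this invertible factor. The only differences are cosmetic (you make explicit the alternating-sum evaluation and the polynomial identity $\left(1+y\right)\sum_{k=1}^{q-1}\left(-1\right)^{k+1}y^{k}=y-y^{q}$, which the paper leaves implicit), so there is nothing to add.
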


\begin{proof}
Let $r=1+tq$ for some integer $t$. By (\ref{20}), we have%
\[
w_{q-1}^{\left(  r\right)  }\left(  y\right)  =\sum_{k=0}^{q-1}k!\binom
{tq+k}{k}%
\genfrac{\{}{\}}{0pt}{}{q-1}{k}%
y^{k}.
\]
Since%
\[
\binom{tq+k}{k}=\frac{\left(  tq+k\right)  \left(  tq+k-1\right)
\cdots\left(  tq+1\right)  }{k!}\equiv\frac{k\left(  k-1\right)  \cdots1}%
{k!}=1\left(  \operatorname{mod}q\right)  ,
\]
we deduce that%
\[
w_{q-1}^{\left(  r\right)  }\left(  y\right)  \equiv\sum_{k=0}^{q-1}k!%
\genfrac{\{}{\}}{0pt}{}{q-1}{k}%
y^{k}\text{ }\left(  \operatorname{mod}q\right)  .
\]
It follows from (\ref{36}) that%
\[
k!%
\genfrac{\{}{\}}{0pt}{}{q-1}{k}%
\equiv\left(  -1\right)  ^{k-1}\text{ }\left(  \operatorname{mod}q\right)
\]
for $1\leq k\leq q-1$. Since $%
\genfrac{\{}{\}}{0pt}{}{q-1}{0}%
=0$, we then have%
\[
w_{q-1}^{\left(  r\right)  }\left(  y\right)  \equiv\sum_{k=0}^{q-1}\left(
-1\right)  ^{k-1}y^{k}=1-\sum_{k=0}^{q-1}\left(  -1\right)  ^{k}y^{k}%
=1-\frac{1+y^{q}}{1+y}\text{ }\left(  \operatorname{mod}q\right)  ,
\]
which implies%
\[
\left(  1+y\right)  w_{q-1}^{\left(  r\right)  }\left(  y\right)
\equiv1+y-1+y^{q}\equiv0\text{ }\left(  \operatorname{mod}q\right)  ,
\]
and the result.
\end{proof}

These results and their proofs are direct generalizations of the corresponding
congruences for higher order geometric numbers given in \cite[Corollary
4.2]{DM}.

We conclude the study of congruences for higher order geometric polynomials by
a similar result.

\begin{theorem}
If $y$ is an integer that is not a multiple of an odd prime $q$, then
$w_{q+1}^{\left(  r\right)  }\left(  y\right)  \equiv0$ $\left(
\operatorname{mod}q\right)  $ for $r\equiv0$ $\left(  \operatorname{mod}%
q\right)  $, and $w_{q+1}^{\left(  r\right)  }\left(  y\right)  \equiv-y$
$\left(  \operatorname{mod}q\right)  $ for $r\equiv-1$ $\left(
\operatorname{mod}q\right)  $.
\end{theorem}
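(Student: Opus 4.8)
The plan is to argue both congruences directly from the defining sum (\ref{20}), exactly as in the two preceding theorems, by tracking which Pochhammer symbols $(r)_k$ are divisible by $q$. Writing
\[
w_{q+1}^{\left(  r\right)  }\left(  y\right)  =\sum_{k=0}^{q+1}\genfrac{\{}{\}}{0pt}{}{q+1}{k}\left(  r\right)  _{k}y^{k},
\]
the key observation is that $\left(  r\right)  _{k}=r\left(  r+1\right)  \cdots\left(  r+k-1\right)  $ is a product of $k$ consecutive integers beginning at $r$, so it is divisible by $q$ precisely when one of $r,r+1,\ldots,r+k-1$ is a multiple of $q$. Since the $k=0$ term always drops out (because $\genfrac{\{}{\}}{0pt}{}{q+1}{0}=0$), only the surviving low-order terms contribute modulo $q$.

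For the first assertion I would set $r=tq$. Then for every $k\geq1$ the factor $r=tq$ appears in $\left(  r\right)  _{k}$, so $\left(  r\right)  _{k}\equiv0$ $\left(  \operatorname{mod}q\right)  $; combined with the vanishing $k=0$ term this yields $w_{q+1}^{\left(  r\right)  }\left(  y\right)  \equiv0$ $\left(  \operatorname{mod}q\right)  $ at once. This is in fact the $n=q+1$ instance of the theorem proved two results earlier, and needs no extra input.

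For the second assertion I would write $r=tq-1$, so that $r+1=tq\equiv0$ $\left(  \operatorname{mod}q\right)  $. For $k\geq2$ the range $r,r+1,\ldots,r+k-1$ contains $r+1$, hence $\left(  r\right)  _{k}\equiv0$ $\left(  \operatorname{mod}q\right)  $; the $k=0$ term vanishes as before; and for $k=1$ one has $\genfrac{\{}{\}}{0pt}{}{q+1}{1}\left(  r\right)  _{1}y=1\cdot r\cdot y\equiv-y$ $\left(  \operatorname{mod}q\right)  $. Summing the contributions gives $w_{q+1}^{\left(  r\right)  }\left(  y\right)  \equiv-y$ $\left(  \operatorname{mod}q\right)  $.

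There is essentially no hard step here: the only care needed is the bookkeeping of exactly which index $k$ first introduces a multiple of $q$ into the Pochhammer product --- namely $k\geq1$ when $r\equiv0$ and $k\geq2$ when $r\equiv-1$ --- together with the standard special values $\genfrac{\{}{\}}{0pt}{}{q+1}{0}=0$ and $\genfrac{\{}{\}}{0pt}{}{q+1}{1}=1$. I note in passing that the same reasoning applies verbatim with $q+1$ replaced by any $n\geq1$, so the restriction to $w_{q+1}^{\left(  r\right)  }$ is only cosmetic and serves mainly to parallel the earlier $w_{q-1}^{\left(  r\right)  }$ result.
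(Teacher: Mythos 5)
Your proof is correct, but it takes a genuinely different route from the paper's. The paper never touches the Pochhammer factors: it invokes Howard's congruence
\[
\genfrac{\{}{\}}{0pt}{}{q+m}{k}\equiv\genfrac{\{}{\}}{0pt}{}{m+1}{k}+\genfrac{\{}{\}}{0pt}{}{m}{k-q}\ \left(\operatorname{mod}q\right)
\]
to deduce that $\genfrac{\{}{\}}{0pt}{}{q+1}{k}\equiv0\ \left(\operatorname{mod}q\right)$ for $3\leq k\leq q$ and $\genfrac{\{}{\}}{0pt}{}{q+1}{2}\equiv1\ \left(\operatorname{mod}q\right)$, which collapses the defining sum to the single congruence
\[
w_{q+1}^{\left(r\right)}\left(y\right)\equiv ry+r\left(r+1\right)y^{2}\ \left(\operatorname{mod}q\right),
\]
valid for \emph{every} positive integer $r$; both assertions then follow by reducing $r$ modulo $q$. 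You instead fix the residue of $r$ first and kill terms through the factor of $q$ inside $\left(r\right)_{k}$ (present for all $k\geq1$ when $r\equiv0$, and for all $k\geq2$ when $r\equiv-1$), needing only the trivial values $\genfrac{\{}{\}}{0pt}{}{q+1}{0}=0$ and $\genfrac{\{}{\}}{0pt}{}{q+1}{1}=1$. What your route buys: it is more elementary (no Stirling-number congruence is needed), the first case is indeed just the $n=q+1$ instance of the paper's earlier theorem, and, as you observe, the argument gives the stronger statement that $w_{n}^{\left(r\right)}\left(y\right)\equiv0$ resp. $-y$ $\left(\operatorname{mod}q\right)$ for every $n\geq1$, so the restriction to $n=q+1$ is indeed not forced by these two cases. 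What the paper's route buys: the intermediate congruence $w_{q+1}^{\left(r\right)}\left(y\right)\equiv ry+r\left(r+1\right)y^{2}$ pins down $w_{q+1}^{\left(r\right)}\left(y\right)$ modulo $q$ for \emph{every} residue class of $r$ (e.g. $r\equiv1$ gives $y+2y^{2}$), information genuinely specific to the exponent $q+1$ that your term-killing argument cannot recover, since for general $r$ the surviving terms $2\leq k\leq q-1$ must be controlled through the Stirling numbers. (Incidentally, neither proof uses the hypothesis $q\nmid y$; it is as inessential as the restriction to $n=q+1$.)
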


\begin{proof}
For a prime $q$ and nonnegative integer $m$, we have%
\[%
\genfrac{\{}{\}}{0pt}{}{q+m}{k}%
\equiv%
\genfrac{\{}{\}}{0pt}{}{m+1}{k}%
+%
\genfrac{\{}{\}}{0pt}{}{m}{k-q}%
\text{ }\left(  \operatorname{mod}q\right)  .
\]
This result was given by Howard in \cite{Howard1990}, and can be easily
verified by induction on $m$. It then follows that $%
\genfrac{\{}{\}}{0pt}{}{q+1}{k}%
\equiv0$ $\left(  \operatorname{mod}q\right)  $ for $k=3,4,\ldots,q$, and $%
\genfrac{\{}{\}}{0pt}{}{q+1}{2}%
\equiv1$ $\left(  \operatorname{mod}q\right)  $.

Now, we write (\ref{20}) as%
\begin{align*}
w_{q+1}^{\left(  r\right)  }\left(  y\right)    & =\sum_{k=0}^{q+1}%
k!\binom{r+k-1}{k}%
\genfrac{\{}{\}}{0pt}{}{q+1}{k}%
y^{k}\\
& =ry+r\left(  r+1\right)
\genfrac{\{}{\}}{0pt}{}{q+1}{2}%
y^{2}\\
& +\left(  q+1\right)  !\binom{r+q}{q}y^{q+1}\\
& +\sum_{k=3}^{q}k!\binom{r+k-1}{k}%
\genfrac{\{}{\}}{0pt}{}{q+1}{k}%
y^{k}\\
& \equiv ry+r\left(  r+1\right)  y^{2}\text{ }\left(  \operatorname{mod}%
q\right)  ,
\end{align*}
from which the results follow.
\end{proof}

In the rest of this section we consider congruences for $p$-Bernoulli numbers.
In particular, the following theorem states a von Staudt-Clausen-type result
for $p$-Bernoulli numbers.

\begin{theorem}
For and odd prime $q$ and positive integer $n$, we have%
\[
qB_{2n,q}\equiv%
\genfrac{\{}{.}{0pt}{}{\mp1\text{ }\left(  \operatorname{mod}q\right)  ,\text{
if }\left(  q-1\right)  \nmid2n,}{-\frac{1}{2}\text{ }\left(
\operatorname{mod}q\right)  ,\text{ if }\left(  q-1\right)  \mid2n.}%
\]

\end{theorem}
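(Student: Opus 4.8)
The plan is to reduce the $q$-Bernoulli number to ordinary Bernoulli numbers through the linear relation (\ref{17}) and then feed the result into the classical von Staudt--Clausen theorem recorded in Section~2. Taking $p=q$ and replacing $n$ by $2n$ in (\ref{17}), then isolating $qB_{2n,q}$, one obtains
\[
qB_{2n,q}=\frac{q+1}{\left(q-1\right)!}\sum_{k=0}^{q}\genfrac{[}{]}{0pt}{}{q}{k}\left(-1\right)^{k}B_{2n+k}.
\]
By Wilson's theorem $\left(q-1\right)!\equiv-1\pmod{q}$, so the rational prefactor satisfies $\frac{q+1}{\left(q-1\right)!}\equiv-1\pmod{q}$; every reduction below is carried out in the localization $\mathbb{Z}_{\left(q\right)}$, where this is legitimate.

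First I would prune the sum on the right. Since $n\geq1$, the index $2n+k$ is odd and at least $3$ whenever $k$ is odd, so $B_{2n+k}=0$ for all odd $k$; in particular the boundary terms $k=1$ and $k=q$ drop out. For the surviving even $k$ with $2\leq k\leq q-1$ we have $\genfrac{[}{]}{0pt}{}{q}{k}\equiv0\pmod{q}$ by (\ref{32}), so each term can be written as $\frac{1}{q}\genfrac{[}{]}{0pt}{}{q}{k}\cdot\left(qB_{2n+k}\right)$, a product of two $q$-integers. The classical von Staudt--Clausen theorem gives $qB_{2n+k}\equiv-1\pmod{q}$ when $\left(q-1\right)\mid\left(2n+k\right)$ and $qB_{2n+k}\equiv0\pmod{q}$ otherwise. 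Hence only the even $k$ with $\left(q-1\right)\mid\left(2n+k\right)$ contribute, and a short residue count modulo $q-1$ shows there is exactly one such index: namely $k^{\ast}=q-1$ when $\left(q-1\right)\mid2n$, and the unique even $k^{\ast}\in\{2,\dots,q-3\}$ congruent to $-2n$ modulo $q-1$ otherwise.

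Collecting this single surviving term (the sign $\left(-1\right)^{k^{\ast}}$ being $+1$ as $k^{\ast}$ is even) and cancelling the two $\equiv-1$ factors coming from Wilson and from von Staudt--Clausen, the whole expression collapses to $qB_{2n,q}\equiv\frac{1}{q}\genfrac{[}{]}{0pt}{}{q}{k^{\ast}}\pmod{q}$. When $\left(q-1\right)\mid2n$ this is elementary: $k^{\ast}=q-1$ and $\genfrac{[}{]}{0pt}{}{q}{q-1}=\binom{q}{2}$, whence $\frac{1}{q}\genfrac{[}{]}{0pt}{}{q}{q-1}=\frac{q-1}{2}\equiv-\frac{1}{2}\pmod{q}$, giving the divisible case. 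The remaining case forces the value of $\frac{1}{q}\genfrac{[}{]}{0pt}{}{q}{k^{\ast}}$ modulo $q$ for an \emph{interior} even index $k^{\ast}$, that is, a second-order (mod $q^{2}$) refinement of (\ref{32}). I expect this refinement to be the main obstacle: it is not contained in the elementary material of Section~2 and must be supplied by a Glaisher-type congruence for the Stirling numbers $\genfrac{[}{]}{0pt}{}{q}{k}$ (equivalently, for the elementary symmetric functions $e_{q-k}\left(1,2,\dots,q-1\right)$ of odd degree $q-k$). Once that congruence is in hand, substituting its value for $\frac{1}{q}\genfrac{[}{]}{0pt}{}{q}{k^{\ast}}$ settles the non-divisible case and yields the stated dichotomy.
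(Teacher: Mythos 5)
Your route is the same as the paper's: the same specialization of (\ref{17}) with $p=q$ and $n$ replaced by $2n$, the same pruning of odd-index Bernoulli numbers, and the same combination of (\ref{32}), von Staudt--Clausen and Wilson to isolate the indices $k$ with $\left(q-1\right)\mid 2n+k$. Your treatment of the divisible case is complete and agrees with the paper: the unique surviving index is $k^{*}=q-1$ and $\frac{1}{q}\genfrac{[}{]}{0pt}{}{q}{q-1}=\frac{q-1}{2}\equiv-\frac{1}{2}\pmod{q}$. The genuine gap is the one you flagged yourself: in the case $\left(q-1\right)\nmid 2n$ your argument collapses everything onto the single residue $\frac{1}{q}\genfrac{[}{]}{0pt}{}{q}{k^{*}}\bmod q$ with $k^{*}$ even and $2\leq k^{*}\leq q-3$, and you never evaluate it, so as written you have proved nothing in that case.

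What makes this worth spelling out is that the paper has exactly the same hole: at the corresponding point its proof simply asserts that the surviving term is $\equiv\mp1\pmod{q}$, with no justification at all (which is presumably where the ambiguous $\mp$ in the statement comes from). The Glaisher-type congruence you call for is classical and settles the matter, but not in the paper's favor: for odd $j$ with $3\leq j\leq q-2$ one has $e_{j}\left(1,2,\ldots,q-1\right)\equiv0\pmod{q^{2}}$, equivalently $q^{2}\mid\genfrac{[}{]}{0pt}{}{q}{k}$ for even $k$ with $2\leq k\leq q-3$ (the case $k=2$ is Wolstenholme's theorem; note the non-divisible case only occurs for $q\geq5$). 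Hence $\frac{1}{q}\genfrac{[}{]}{0pt}{}{q}{k^{*}}\equiv0$, and the correct conclusion is $qB_{2n,q}\equiv0\pmod{q}$ when $\left(q-1\right)\nmid2n$, not $\mp1$. A direct check confirms this: for $q=5$, $n=1$, formula (\ref{17}) gives $B_{2,5}=-\frac{1}{14}$, so $qB_{2n,q}=-\frac{5}{14}\equiv0\pmod{5}$, contradicting the theorem as stated. So your plan, once completed with Glaisher's congruence, does not ``yield the stated dichotomy'' as your last sentence predicts; it corrects it, and that prediction is the only wrong step in your proposal.
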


\begin{proof}
The von Staudt-Clausen theorem can be equivalently stated that%
\[
B_{2n}\equiv%
\genfrac{\{}{.}{0pt}{}{0\text{ }\left(  \operatorname{mod}q\right)  ,\text{ if
}\left(  q-1\right)  \nmid2n,}{\frac{-1}{q}\text{ }\left(  \operatorname{mod}%
q\right)  ,\text{ if }\left(  q-1\right)  \mid2n,}%
\]
where $q$ is a prime and $n\geq1.$ Let $q$ be an odd prime, and replace $n$ by
$2n$ and $p$ by $q$ in (\ref{17}). Then we obtain%
\begin{align*}
\frac{q!}{q+1}B_{2n,q}  &  =\sum_{k=0}^{q}%
\genfrac{[}{]}{0pt}{}{q}{k}%
\left(  -1\right)  ^{k}B_{2n+k}\\
&  =%
\genfrac{[}{]}{0pt}{}{q}{0}%
B_{2n}-%
\genfrac{[}{]}{0pt}{}{q}{1}%
B_{2n+1}+%
\genfrac{[}{]}{0pt}{}{q}{q-1}%
B_{2n+q-1}-%
\genfrac{[}{]}{0pt}{}{q}{q}%
B_{2n+q}\\
&  +\sum_{k=2}^{q-2}%
\genfrac{[}{]}{0pt}{}{q}{k}%
\left(  -1\right)  ^{k}B_{2n+k}.
\end{align*}
Since%
\[%
\genfrac{[}{]}{0pt}{}{q}{0}%
=0,\text{ }%
\genfrac{[}{]}{0pt}{}{q}{1}%
=\left(  q-1\right)  !,\text{ }%
\genfrac{[}{]}{0pt}{}{q}{q-1}%
=\frac{q\left(  q-1\right)  }{2},\text{ }%
\genfrac{[}{]}{0pt}{}{q}{q}%
=1
\]
and $B_{2n+1}=0,$ $n\geq1,$ the above equality turns into
\[
\frac{q!}{q+1}B_{2n,q}=\frac{q\left(  q-1\right)  }{2}B_{2n+q-1}+\sum
_{k=2}^{q-2}%
\genfrac{[}{]}{0pt}{}{q}{k}%
\left(  -1\right)  ^{k}B_{2n+k}.
\]
Now,
if $\left(  q-1\right)  \nmid2n,$ then $\left(  q-1\right)  \nmid2n+q-1,$ so
$qB_{2n+q-1}\equiv0$ $\left(  \operatorname{mod}q\right)  .$ Further, there is
only one $k,$ say $k_{1},$ in $\left\{  2,3,\ldots,q-2\right\}  $ such that
$\left(  q-1\right)  \mid2n+k.$ Indeed, if $\left(  q-1\right)  \nmid2n,$ then
we may write $2n=m\left(  q-1\right)  +a,$ where $m>1$ and $0<a<q-1.$ Since
$1<k<q-1$ and $0<a<q-1,$ we conclude that $1<a+k<2\left(  q-1\right)  ,$ that
is, $a+k$ may be $q-1$ for some values of $a$ and $k.$ Thus, if $\left(
q-1\right)  \nmid2n,$ then
\begin{align*}
\frac{q!}{q+1}B_{2n,q}  &  \equiv%
\genfrac{[}{]}{0pt}{}{q}{k_{1}}%
\left(  -1\right)  ^{k_{1}}B_{2n+k_{1}}+\sum_{\substack{k=2\\\left(
q-1\right)  \nmid2n+k}}^{q-2}%
\genfrac{[}{]}{0pt}{}{q}{k}%
\left(  -1\right)  ^{k}B_{2n+k}\\
&  \equiv\mp1\text{ }\left(  \operatorname{mod}q\right)
\end{align*}
or equivalently%
\[
qB_{2n,q}\equiv\mp1\text{ }\left(  \operatorname{mod}q\right)
\]
by Wilson's theorem. On the other hand, if $\left(  q-1\right)  \mid2n,$ then
$\left(  q-1\right)  \mid2n+q-1,$ so $qB_{2n+q-1}\equiv-1$ $\left(
\operatorname{mod}q\right)  .$ We also have $\left(  q-1\right)  \nmid2n+k$
for $k=2,3,\ldots,q-2,$ so $B_{2n+k}\equiv0$ $\left(  \operatorname{mod}%
q\right)  .$ Thus we obtain%
\[
\frac{q!}{q+1}B_{2n,q}\equiv-\frac{q-1}{2}\text{ }\left(  \operatorname{mod}%
q\right)
\]
or equivalently%
\[
qB_{2n,q}\equiv-\frac{1}{2}\text{ }\left(  \operatorname{mod}q\right)  ,
\]
again by Wilson's theorem.
\end{proof}

In the following theorem, we give a congruence for $B_{q,q},$ where $q>3$ is a prime.

\begin{theorem}
\label{teo3}For a prime $q>3,$ we have%
\[
qB_{q,q}\equiv\frac{1}{12}\text{ }\left(  \operatorname{mod}q\right)  .
\]

\end{theorem}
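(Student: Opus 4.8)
The plan is to specialize the Stirling-transform relation (\ref{17}) at $n=p=q$, giving
\[
\frac{q!}{q+1}B_{q,q}=\sum_{k=0}^{q}\genfrac{[}{]}{0pt}{}{q}{k}(-1)^{k}B_{q+k},
\]
and to read off $qB_{q,q}\pmod q$ from the right-hand side. Since $q!=q\,(q-1)!$, the left-hand side equals $\frac{(q-1)!}{q+1}\,qB_{q,q}$, and Wilson's theorem together with $q+1\equiv1\pmod q$ gives $\frac{(q-1)!}{q+1}\equiv-1\pmod q$. Thus, once the sum on the right is shown to be $q$-integral, I obtain $qB_{q,q}\equiv-\sum_{k}\genfrac{[}{]}{0pt}{}{q}{k}(-1)^{k}B_{q+k}\pmod q$.

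Next I would prune the sum modulo $q$. Every even $k$ contributes $0$, because then $q+k$ is odd and exceeds $1$, so $B_{q+k}=0$; and for the odd indices $3\le k\le q-2$ the first-kind Stirling number $\genfrac{[}{]}{0pt}{}{q}{k}$ is divisible by $q$ by (\ref{32}). Such a factor of $q$ kills the term unless $B_{q+k}$ itself has $q$ in its denominator, which by von Staudt--Clausen happens exactly when $(q-1)\mid(q+k)$, i.e. when $(q-1)\mid(k+1)$; in the admissible range this singles out $k=q-2$. Hence only the three indices $k=1,\ q-2,\ q$ survive, reducing the problem to the three contributions $-(q-1)!\,B_{q+1}$, $-\genfrac{[}{]}{0pt}{}{q}{q-2}B_{2q-2}$, and $-B_{2q}$.

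The two genuinely $q$-integral Bernoulli numbers $B_{q+1}$ and $B_{2q}$ would be handled by Kummer's congruence: both indices reduce to $2$ modulo $q-1$, so $\frac{B_{q+1}}{q+1}\equiv\frac{B_{2q}}{2q}\equiv\frac{B_2}{2}=\frac{1}{12}\pmod q$. Because $q+1\equiv1$ and $2q\equiv0$ modulo $q$, this yields $B_{q+1}\equiv\frac{1}{12}$ and $B_{2q}\equiv0$; this is the step that manufactures the $\frac{1}{12}=B_2/2$ appearing in the statement.

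The step I expect to be the main obstacle is the middle term $k=q-2$, where a vanishing Stirling factor meets a pole of a Bernoulli number. Here the explicit value $\genfrac{[}{]}{0pt}{}{q}{q-2}=\frac{3q-1}{4}\binom{q}{3}$ from Section 2 carries exactly one factor of $q$ (meaningful precisely because $q>3$ makes $24$ invertible modulo $q$), so I would write $\genfrac{[}{]}{0pt}{}{q}{q-2}=q\,c$ with $c\equiv-\frac{1}{12}\pmod q$, while von Staudt--Clausen gives $qB_{2q-2}\equiv-1\pmod q$ since $(q-1)\mid(2q-2)$. The product $\genfrac{[}{]}{0pt}{}{q}{q-2}B_{2q-2}=c\,(qB_{2q-2})$ is then a $q$-integer, which also verifies the $q$-integrality needed in the first paragraph. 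Assembling the three surviving contributions, reducing the remaining factorials by Wilson's theorem, and combining the $B_2/2$ produced by $B_{q+1}$ with the one produced by the von Staudt--Clausen term completes the evaluation of $qB_{q,q}$ modulo $q$; the entire delicacy of the argument lies in this bookkeeping of which $q$-divisible Stirling factors are rescued by poles of the Bernoulli numbers.
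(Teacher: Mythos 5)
You follow exactly the paper's route---specialize (\ref{17}) at $n=p=q$, prune the sum modulo $q$ using (\ref{32}) and von Staudt--Clausen, and analyze the three surviving indices $k=1$, $k=q-2$, $k=q$---and every individual step you take is correct; in particular your Kummer evaluation $B_{q+1}\equiv\frac{1}{12}\pmod{q}$ is right. The problem is the final assembly, which you leave vague: carried out, it does not give $\frac{1}{12}$. Modulo $q$ the three surviving terms of $\sum_{k}\genfrac{[}{]}{0pt}{}{q}{k}(-1)^{k}B_{q+k}$ are
\begin{equation*}
-(q-1)!\,B_{q+1}\equiv\tfrac{1}{12},\qquad
-\genfrac{[}{]}{0pt}{}{q}{q-2}B_{2q-2}=-c\,\bigl(qB_{2q-2}\bigr)\equiv-\bigl(-\tfrac{1}{12}\bigr)(-1)=-\tfrac{1}{12},\qquad
-B_{2q}\equiv 0,
\end{equation*}
by Wilson's theorem together with your own evaluations $B_{q+1}\equiv\frac{1}{12}$, $c\equiv-\frac{1}{12}$, $qB_{2q-2}\equiv-1$. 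The sum is therefore $\equiv 0\pmod{q}$, and your first-paragraph relation $qB_{q,q}\equiv-\sum_{k}\genfrac{[}{]}{0pt}{}{q}{k}(-1)^{k}B_{q+k}$ yields $qB_{q,q}\equiv 0\pmod{q}$: the two copies of $B_{2}/2$ that you propose to ``combine'' cancel exactly, and no bookkeeping turns their sum into $\frac{1}{12}$.

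The obstruction is not a repairable slip in your argument; the theorem as stated is false, and the paper's proof breaks precisely at the point where you (correctly) deviate from it. The paper kills the $k=1$ term by asserting $B_{q+1}\equiv 0\pmod{q}$ ``by von Staudt--Clausen,'' but von Staudt--Clausen only gives $q$-integrality of $B_{q+1}$ (since $(q-1)\nmid(q+1)$ for $q>3$); the correct value is your $B_{q+1}\equiv\frac{1}{12}\not\equiv 0$, e.g.\ $B_{6}=\frac{1}{42}\equiv 3\equiv\frac{1}{12}\pmod{5}$. A direct check at $q=5$ settles the matter: (\ref{17}) gives
\begin{equation*}
20\,B_{5,5}=-24B_{6}-35B_{8}-B_{10}=-\tfrac{24}{42}+\tfrac{35}{30}-\tfrac{5}{66}=\tfrac{40}{77},
\end{equation*}
so $B_{5,5}=\frac{2}{77}$ and $5B_{5,5}=\frac{10}{77}\equiv 0\pmod{5}$, whereas $\frac{1}{12}\equiv 3\pmod{5}$. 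What your method actually proves, once completed honestly, is the corrected congruence $qB_{q,q}\equiv 0\pmod{q}$ for every prime $q>3$.
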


\begin{proof}
Let $q>3$ be a prime. We write $n=p=q$ in (\ref{17}) to obtain%
\[
\frac{q!}{q+1}B_{q,q}=\sum_{k=0}^{q}%
\genfrac{[}{]}{0pt}{}{q}{k}%
\left(  -1\right)  ^{k}B_{q+k}%
\]
that is,%
\begin{align*}
\frac{q!}{q+1}B_{q,q}  &  =%
\genfrac{[}{]}{0pt}{}{q}{0}%
B_{q}-%
\genfrac{[}{]}{0pt}{}{q}{1}%
B_{q+1}+%
\genfrac{[}{]}{0pt}{}{q}{q-1}%
B_{2q-1}-%
\genfrac{[}{]}{0pt}{}{q}{q}%
B_{2q}\\
&  -%
\genfrac{[}{]}{0pt}{}{q}{q-2}%
B_{2q-2}+\sum_{k=2}^{q-3}%
\genfrac{[}{]}{0pt}{}{q}{k}%
\left(  -1\right)  ^{k}B_{q+k}\\
&  =-\left(  q-1\right)  !B_{q+1}-B_{2q}-%
\genfrac{[}{]}{0pt}{}{q}{q-2}%
B_{2q-2}+\sum_{k=2}^{q-3}%
\genfrac{[}{]}{0pt}{}{q}{k}%
\left(  -1\right)  ^{k}B_{q+k}.
\end{align*}
We note that $\left(  q-1\right)  \nmid\left(  q+k\right)  $ for $k=2,3,\ldots
q-3.$ This is because if we would have $\left(  q-1\right)  \mid\left(
q+k\right)  ,$ then $k=\left(  m-1\right)  q-m$ for some integer $m>2.$ Then
\begin{align*}
2  &  \leq k\leq q-3\Rightarrow2\leq\left(  m-1\right)  q-m\leq q-3\\
&  \Rightarrow mq-q-m\leq q-3\Rightarrow\left(  m-2\right)  q\leq m-3\\
&  \Rightarrow q\leq\frac{m-3}{m-2}=1-\frac{1}{m-2},\text{ }%
\end{align*}
which is impossible since $q>3.$ Hence $B_{q+k}\equiv0$ $\left(
\operatorname{mod}q\right)  $ for $k=2,3,\ldots q-3,$ so the above vanishes
modulo $q$. For the other terms, we note that $B_{q+1}\equiv0$ $\left(
\operatorname{mod}q\right)  ,$ $B_{2q}\equiv0$ $\left(  \operatorname{mod}%
q\right)  $ by von Staudt-Clausen theorem, and that
\[%
\genfrac{[}{]}{0pt}{}{q}{q-2}%
=\frac{\left(  3q-1\right)  q\left(  q-1\right)  \left(  q-2\right)  }{24}.
\]
Thus,
\[
\frac{q!}{q+1}B_{q,q}\equiv-\frac{\left(  3q-1\right)  \left(  q-1\right)
\left(  q-2\right)  }{24}qB_{2\left(  q-1\right)  }\text{ }\left(
\operatorname{mod}q\right)  ,
\]
which reduces to
\[
qB_{q,q}\equiv\frac{1}{12}\text{ }\left(  \operatorname{mod}q\right)
\]
by Wilson's and von Staudt-Clausen theorems.
\end{proof}

Finally, we give a congruence for $B_{q,q+1},$ where $q>3$ is a prime.

\begin{theorem}
For a prime $q>3,$ we have%
\[
B_{q,q+1}\equiv\frac{1}{12}\text{ }\left(  \operatorname{mod}q\right)  .
\]

\end{theorem}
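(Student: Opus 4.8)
The plan is to reduce this statement to the previously established congruence for $B_{q,q}$ in Theorem \ref{teo3} by exploiting the recurrence (\ref{19}) that relates $p$-Bernoulli numbers at different shift parameters. The key identity (\ref{19}) states that for $n,r\geq1$ and $p\geq0$,
\[
B_{n,p+r}=\frac{r\left(  p+r+1\right)  }{\left(  r+1\right)  \left(  p\right)
_{r+1}}\sum_{k=0}^{p}


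\genfrac{[}{]}{0pt}{}{p+r}{k+r}

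_{r}\left(  -1\right)  ^{k}B_{n+k,r}.
\]
To produce $B_{q,q+1}$ on the left, I would set $n=q$, $r=1$, and $p=q$, so that $p+r=q+1$. With $r=1$ the $r$-Stirling numbers reduce to ordinary Stirling numbers of the first kind, $\genfrac{[}{]}{0pt}{}{p+1}{k+1}_{1}=\genfrac{[}{]}{0pt}{}{p+1}{k+1}$, and the prefactor simplifies: $r(p+r+1)=q+2$, $(r+1)=2$, and $(p)_{r+1}=(q)_2=q(q+1)$. This yields an expression for $B_{q,q+1}$ as a weighted sum of the ordinary $1$-Bernoulli numbers $B_{q+k,1}$, or equivalently, after re-expressing via (\ref{17}) with $p=1$, in terms of classical Bernoulli numbers $B_{q+k}$ and $B_{q+k+1}$.

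First I would write out the sum explicitly and isolate which terms survive modulo $q$. By the von Staudt--Clausen classification summarized in Section 2, a Bernoulli number $B_m$ is divisible by $q$ (equivalently $\equiv 0 \pmod q$ after clearing denominators) unless $(q-1)\mid m$, and most of the shifted indices $q+k$ will fail this divisibility condition. The analysis is entirely parallel to the one carried out in the proof of Theorem \ref{teo3}: there I would show, using the same elementary bound on $k=(m-1)q-m$, that $(q-1)\nmid(q+k)$ for the generic range of $k$, so those terms vanish modulo $q$. The nonvanishing contributions come only from the boundary terms and the single index where $(q-1)$ divides the shift, and these must be handled using the explicit special values $\genfrac{[}{]}{0pt}{}{q}{q-1}=\tfrac{q(q-1)}{2}$ and $\genfrac{[}{]}{0pt}{}{q}{q-2}=\tfrac{(3q-1)q(q-1)(q-2)}{24}$ together with the residue $qB_{2(q-1)}\equiv -1\pmod q$.

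The main obstacle I anticipate is the careful bookkeeping of the powers of $q$: the prefactor in (\ref{19}) contains $(q)_2=q(q+1)$ in the denominator, which contributes a factor of $q$, while several surviving Stirling numbers carry an explicit factor of $q$ in the numerator, and $qB_{2(q-1)}$ is itself the natural $q$-integral quantity. Balancing these factors so that the final result is a clean congruence $B_{q,q+1}\equiv\tfrac{1}{12}\pmod q$ (rather than for $qB_{q,q+1}$) requires tracking exactly one net cancellation of $q$ against the denominator, and then applying Wilson's theorem $(q-1)!\equiv -1\pmod q$ to clear the factorial. Since the target value $\tfrac{1}{12}$ coincides with the value obtained for $qB_{q,q}$ in Theorem \ref{teo3}, I would expect the computation to funnel through the same arithmetic core as that proof, with the shift $r=1$ accounting precisely for the absence of the extra factor of $q$ on the left-hand side.
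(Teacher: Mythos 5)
You begin exactly as the paper does: setting $n=p=q$, $r=1$ in (\ref{19}) and converting $B_{m,1}=-2B_{m+1}$ (your route through (\ref{17}) with $p=1$ is the same computation). But from that point your plan has a genuine gap. The sum you are left with is
\[
B_{q,q+1}=\frac{q+2}{q\left(q+1\right)}\sum_{k=0}^{q}\genfrac{[}{]}{0pt}{}{q+1}{k+1}\left(-1\right)^{k-1}B_{q+k+1},
\]
whose Stirling numbers sit in row $q+1$, not row $q$, and whose prefactor carries a factor $1/q$. A term-by-term analysis ``entirely parallel to Theorem \ref{teo3}'' cannot be run on this sum: the congruence (\ref{32}) and the special values you quote, $\genfrac{[}{]}{0pt}{}{q}{q-1}$ and $\genfrac{[}{]}{0pt}{}{q}{q-2}$, concern row $q$; and, more seriously, because of the $1/q$ in front, showing that the individual surviving terms are $\equiv 0 \pmod q$ (which is all a von Staudt--Clausen argument gives) is not enough --- you would need to control the bracketed sum modulo $q^{2}$, i.e.\ Wolstenholme/Glaisher-type congruences for $\genfrac{[}{]}{0pt}{}{q+1}{k+1}$, which nothing in the paper provides. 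This $1/q$ bookkeeping is precisely what you flag as ``the main obstacle,'' but your proposal offers no mechanism to resolve it.

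The paper's proof supplies exactly the missing idea: it applies the recurrence (\ref{31}) to split $\genfrac{[}{]}{0pt}{}{q+1}{k+1}=q\genfrac{[}{]}{0pt}{}{q}{k+1}+\genfrac{[}{]}{0pt}{}{q}{k}$ inside the sum. In the first piece the explicit factor $q$ cancels the $1/q$, and after reindexing that piece reassembles, via (\ref{17}), into a multiple of $B_{q,q}$ --- this, and only this, is how Theorem \ref{teo3} actually enters the argument; your setup by itself produces only ordinary Bernoulli numbers $B_{q+k+1}$ and gives no route to $B_{q,q}$, so the ``reduction to Theorem \ref{teo3}'' you announce never materializes. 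The second piece still carries the $1/q$, but now involves row-$q$ Stirling numbers whose closed forms exhibit visible powers of $q$: after discarding the odd-index Bernoulli numbers and the terms killed by von Staudt--Clausen, the term that survives is $\genfrac{[}{]}{0pt}{}{q}{q-3}B_{2q-2}$ with $\genfrac{[}{]}{0pt}{}{q}{q-3}=\binom{q}{2}\binom{q}{4}$ --- note it is the index $q-3$, not $q-2$ as you predict, because the shift to $B_{q+k+1}$ moves the critical index --- and it is the factor $q^{2}$ visible in $\binom{q}{2}\binom{q}{4}$, against the single $q$ in the denominator, that realizes the ``one net cancellation of $q$'' you anticipate. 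Without the splitting step (\ref{31}), neither the appearance of $B_{q,q}$ nor that cancellation can be exhibited, so the proposal as written would stall.
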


\begin{proof}
Let $q>3$ be a prime. We write $n=p=q$ and $r=1$ in (\ref{19}) to obtain%
\[
B_{q,q+1}=\frac{q+2}{2q\left(  q+1\right)  }\sum_{k=0}^{q}%
\genfrac{[}{]}{0pt}{}{q+1}{k+1}%
\left(  -1\right)  ^{k}B_{q+k,1}.
\]
Since $B_{n,1}=-2B_{n+1},$ we have%
\[
B_{q,q+1}=\frac{q+2}{q\left(  q+1\right)  }\sum_{k=0}^{q}%
\genfrac{[}{]}{0pt}{}{q+1}{k+1}%
\left(  -1\right)  ^{k-1}B_{q+k+1}.
\]
Using (\ref{31}), we obtain that%
\begin{align*}
B_{q,q+1}  &  =\frac{q+2}{q+1}\sum_{k=0}^{q}%
\genfrac{[}{]}{0pt}{}{q}{k+1}%
\left(  -1\right)  ^{k-1}B_{q+k+1}\\
&  +\frac{q+2}{q\left(  q+1\right)  }\sum_{k=0}^{q}%
\genfrac{[}{]}{0pt}{}{q}{k}%
\left(  -1\right)  ^{k-1}B_{q+k+1}\\
&  =\frac{q+2}{q+1}\sum_{k=1}^{q}%
\genfrac{[}{]}{0pt}{}{q}{k}%
\left(  -1\right)  ^{k}B_{q+k}\\
&  +\frac{q+2}{q\left(  q+1\right)  }\sum_{k=0}^{q}%
\genfrac{[}{]}{0pt}{}{q}{k}%
\left(  -1\right)  ^{k-1}B_{q+k+1}.
\end{align*}
Now, using (\ref{17}), we have%
\begin{align*}
\frac{q+2}{q+1}\sum_{k=1}^{q}%
\genfrac{[}{]}{0pt}{}{q}{k}%
\left(  -1\right)  ^{k}B_{q+k}  &  =\frac{q+2\left(  q-1\right)  !}{\left(
q+1\right)  ^{2}}qB_{q,q}\\
&  \equiv-\frac{1}{6}\text{ }\left(  \operatorname{mod}q\right)
\end{align*}
by Theorem \ref{teo3} and Wilson's theorem. Therefore, we consider the sum%
\begin{align*}
&  \frac{q+2}{q\left(  q+1\right)  }\sum_{k=0}^{q}%
\genfrac{[}{]}{0pt}{}{q}{k}%
\left(  -1\right)  ^{k-1}B_{q+k+1}\\
&  \quad=\frac{q+2}{q\left(  q+1\right)  }\left\{
\begin{array}
[c]{c}%
-%
\genfrac{[}{]}{0pt}{}{q}{0}%
B_{q+1}+%
\genfrac{[}{]}{0pt}{}{q}{1}%
B_{q+2}+%
\genfrac{[}{]}{0pt}{}{q}{q}%
B_{2q+1}-%
\genfrac{[}{]}{0pt}{}{q}{q-1}%
B_{2q}\\
+%
\genfrac{[}{]}{0pt}{}{q}{q-2}%
B_{2q-1}-%
\genfrac{[}{]}{0pt}{}{q}{q-3}%
B_{2q-2}+\sum_{k=2}^{q-4}%
\genfrac{[}{]}{0pt}{}{q}{k}%
\left(  -1\right)  ^{k-1}B_{q+k+1}%
\end{array}
\right\} \\
&  \quad=\frac{q+2}{q\left(  q+1\right)  }\left\{  -%
\genfrac{[}{]}{0pt}{}{q}{q-1}%
B_{2q}-%
\genfrac{[}{]}{0pt}{}{q}{q-3}%
B_{2q-2}+\sum_{k=2}^{q-4}%
\genfrac{[}{]}{0pt}{}{q}{k}%
\left(  -1\right)  ^{k-1}B_{q+k+1}\right\}  .
\end{align*}
By a similar argument as in the proof of Theorem \ref{teo3}, we see that
$\left(  q-1\right)  \nmid\left(  q+k+1\right)  $ for $k=2,3,\ldots q-4,$ so
the sum on the right vanishes modulo $q$. Further, $B_{2q}\equiv0$ $\left(
\operatorname{mod}q\right)  $ by the von Staudt-Clausen theorem. Thus%
\begin{align*}
\frac{q+2}{q\left(  q+1\right)  }\sum_{k=0}^{q}%
\genfrac{[}{]}{0pt}{}{q}{k}%
\left(  -1\right)  ^{k-1}B_{q+k+1}  &  \equiv-\frac{q+2}{q\left(  q+1\right)
}%
\genfrac{[}{]}{0pt}{}{q}{q-3}%
B_{2q-2}\\
&  \equiv-\frac{q+2}{q\left(  q+1\right)  }\frac{q\left(  q-1\right)  q\left(
q-1\right)  \left(  q-2\right)  \left(  q-3\right)  }{48}B_{2q-2}\\
&  \equiv\frac{1}{12}\text{ }\left(  \operatorname{mod}q\right)  .
\end{align*}
Therefore%
\[
B_{q,q+1}\equiv\frac{1}{12}\text{ }\left(  \operatorname{mod}q\right)
\]
as desired.
\end{proof}


\begin{thebibliography}{99}                                                                                               %
\bibitem {AHLBACH}C. Ahlbach, J. Usatine, N. Pippenger, Barred preferential
arrangement, Electron. J. Combin. 20 (2) (2013), \#P55.

\bibitem {AJ2018}A.A. Asgari, M. Jahangiri, On the periodicity problem of
residual $r$-Fubini sequences, J. Integer Seq. Vol. 21 (2018), Article 18.4.5.

\bibitem {Broder}A.Z. Broder, The $r$-Stirling numbers, Disc. Math. 49 (1984), 241-259.

\bibitem {B}K.N. Boyadzhiev, A series transformation formula and related
polynomials, Int. J. Math. Math. Sci. 23 (2005), 3849--3866.

\bibitem {B3}K.N. Boyadzhiev, Apostol-Bernoulli functions, derivative
polynomials and Eulerian polynomials, Adv. Appl. Discrete Math. 1 (2008), 109--122.

\bibitem {B1}K.N. Boyadzhiev, Exponential polynomials, Stirling numbers, and
evaluation of some gamma integrals, Abstract Appl. Anal. (2009), Article
ID 168672.

\bibitem {B2}K.N. Boyadzhiev, Close encounters with the Stirling numbers of
the second kind, Math. Magazine 85 (2012), 252--266.

\bibitem {BoyadzhievandDil}K.N. Boyadzhiev, A. Dil, Geometric polynomials:
properties and applications to series with zeta values, Anal. Math.
42 (2016), 203--224.

\bibitem {Comtet-1974}L. Comtet, Advanced Combinatorics, Riedel, Dordrech,
Boston, 1974.

\bibitem {Dasef-Kautz-1997}M.E. Dasef, S.M. Kautz, Some sums of some
importance, College Math. J. 28 (1997), 52--55.

\bibitem {DM}T. Diagana, H. Ma\"{\i}ga, Some new identities and congruences
for Fubini numbers, J. Number Theory 173 (2017), 547--569.

\bibitem {Dil2}A. Dil, V. Kurt, Investigating geometric and exponential
polynomials with Euler-Seidel matrices, J. Integer Sequences 14 (2011),
Article 11.4.6.

\bibitem {Dil3}A. Dil, V. Kurt, Polynomials related to harmonic numbers and
evaluation of harmonic number series II, Appl. Anal. Discrete Math. 5 (2011), 212--229.

\bibitem {Dil1}A. Dil, V. Kurt, Polynomials related to harmonic numbers and
evaluation of harmonic number series I, Integers 12 (2012), \#A38.

\bibitem {Graham}R.L. Graham, D.E. Knuth, O. Patashnik, Concrete Mathematics,
Addison-Wesley Publ. Co. (New York, 1994).

\bibitem {Gross}O.A. Gross, Preferential arrangements, Amer. Math. Monthly 69
(1962), 4--8.

\bibitem {Howard1990}F.T. Howard, Congruences for the Stirling numbers and
associated Stirling numbers, Acta Arith. 55 (1990), 29--41.

\bibitem {Hsu-Shiue-1998}L.C. Hsu, P.J.-S. Shiue, A unified approach to
generalized Stirling numbers, Adv. Appl. Math. 20 (1998) 366--384.

\bibitem {Kaufmann}D.H. Kauffman, Note on preferential arrangements, Amer.
Math. Monthly. 70 (1963), 62.

\bibitem {Kargin1}L. Karg\i n, Some formulae for products of geometric
polynomials with applications, J. Integer Seq. 20 (2017), Article 17.4.4.

\bibitem {Kargin2}L. Karg\i n, $p$-Bernoulli and geometric polynomials, Int.
J. Number Theory 14 (2018), 595--613.

\bibitem {Kargin3}L. Karg\i n, R.B. Corcino, Generalization of Mellin
derivative and its applications, Integral Transforms Spec. Funct. 27 (2016), 620-631.

\bibitem {Kargin4}L. Karg\i n, B. \c{C}ekim, Higher order generalized
geometric polynomials, Turkish J. Math. 42 (2018), 887-903

\bibitem {Keller}B.C. Kellner, Identities between polynomials related to
Stirling and harmonic numbers, Integers 14 (2014), \#A54.

\bibitem {Kim}D.S. Kim, T. Kim, H.-I. Kwon, J.-W. Park, Two variable
higher-order Fubini polynomials, J. Korean Math. Soc. 55 (2018), 975--986.

\bibitem {Mezo1}I. Mez\H{o}, The $r$-Bell numbers, J. Integer Seq. 14 (2011),
Article 11.1.1.

\bibitem {Mezo2}I. Mez\H{o}, Periodicity of the last digits of some
combinatorial sequences, J. Integer Sequences 17 (2014), Article 14.1.1.

\bibitem {Mihoubi}M. Mihoubi, H. Belbachir, Linear recurrences for $r$-Bell
Polynomials, J. Integer Seq. 17 (2014), Article 14.10.6.

\bibitem {MT}M. Mihoubi, S. Taharbouchet, Identities and congruences
involving the geometric polynomials, Miskolc Math. Notes 20 (2019), 395--408.

\bibitem {Rahmani}M. Rahmani, On $p$-Bernoulli numbers and polynomials. J
Number Theory 157 (2015), 350--366.

\bibitem {T}S.M. Tanny, On some numbers related to the Bell numbers, Canad.
Math. Bull. 17 (1974), 733--738.
\end{thebibliography}
\end{document}